\documentclass{article}

\RequirePackage[margin=1in]{geometry}

\usepackage{
    amsmath,
    amsthm,
    amssymb,
    mathtools,
    mathrsfs,
    mathdots,
    stmaryrd,
    dsfont
}

\usepackage{tikz}
\usepackage{pgfplots}
\usepgfplotslibrary{polar}
\pgfplotsset{compat=newest}

\usepackage{enumitem}
\usepackage{multicol}

\usepackage{color}
\usepackage[svgnames]{xcolor}
\usepackage[dvipsnames]{xcolor}
\usepackage[most]{tcolorbox}

\usepackage{soul}

\usepackage{
    sansmath,
    textcomp,
    wasysym,
    xspace,
    setspace
}

\usepackage{fancyhdr}
\usepackage[normalem]{ulem}

\usepackage{thmtools}
\makeatletter
\renewcommand\thmt@autorefsetup{%
  \@xa\def\csname\thmt@envname autorefname\@xa\endcsname
    \@xa{\thmt@thmname}%
}
\makeatother

\usepackage{hyperref}
\usepackage{aliascnt}
\usepackage[capitalize]{cleveref}
\usepackage{lipsum}

\makeatletter
\let\@fnsymbol\@alph
\makeatother

\newcommand{\rout}{\bgroup\markoverwith{\textcolor{SuggestionColor}{\rule[0.5ex]{2pt}{0.4pt}}}\ULon}

\newtheorem{theorem}{Theorem}[section]
\newaliascnt{corollary}{theorem}
\newtheorem{corollary}[corollary]{Corollary}
\aliascntresetthe{corollary}
\newaliascnt{lemma}{theorem}
\newtheorem{lemma}[lemma]{Lemma}
\aliascntresetthe{lemma}
\newaliascnt{proposition}{theorem}
\newtheorem{proposition}[proposition]{Proposition}
\aliascntresetthe{proposition}
\newaliascnt{remark}{theorem}

\aliascntresetthe{remark}
\newaliascnt{question}{theorem}
\newtheorem{question}[question]{Question}
\aliascntresetthe{question}
\theoremstyle{definition}
\newaliascnt{definition}{theorem}
\newtheorem{definition}[definition]{Definition}
\aliascntresetthe{definition}
\newaliascnt{example}{theorem}
\newtheorem{example}[example]{Example}
\aliascntresetthe{example}
\newaliascnt{conjecture}{theorem}

\aliascntresetthe{conjecture}
\newaliascnt{fact}{theorem}

\aliascntresetthe{fact}
\newcommand{\N}{\mathbb{N}}

\newcommand{\R}{\mathbb{R}}
\newcommand{\C}{\mathbb{C}}

\newcommand{\F}{\mathbb{F}}

\newcommand{\A}{\mathcal{A}}

\DeclareMathOperator{\Out}{Out}
\DeclareMathOperator{\IBr}{IBr}
\newcommand{\An}[1]{\A_{#1}}
\newcommand{\Sn}[1]{\mathcal{S}_{#1}}

\newcommand{\GK}[1]{\overline{\Gamma}\left(#1\right)}

\definecolor{SuggestionColor}{RGB}{100,200,120}
\definecolor{DoNotTouchColor}{named}{teal}
\definecolor{TodoColor}{named}{red}

\title{The prime graph complements of $\A_{11}$- and $\A_{13}$-solvable groups}
\author{
    Logan Abounader \thanks{qhi15@txstate.edu; Department of Mathematics, Texas State University, 601 University Drive, San Marcos, TX 78666, USA.}
    \and 
    Austin Bell \thanks{acp202@txstate.edu; Department of Mathematics, Texas State University, 601 University Drive, San Marcos, TX 78666, USA.}
    \and 
    April Beun \thanks{abeun@gmu.edu; Department of Mathematical Sciences, George Mason University, 4400 University Drive Fairfax, Virginia 22030, USA.}
    \and 
    Elvis Cabrera \thanks{elviscal@mit.edu; Department of Mathematics, Massachusetts Institute of Technology, 77 Massachusetts Avenue Cambridge, MA 02139, USA}
    \and 
    Dakota Nobuyuki Goto \thanks{dgoto@fordham.edu; Department of Mathematics, Fordham University, 113 W 60th Street New York, NY 10023, USA.}
    \and 
    Thomas Michael Keller \thanks{keller@txstate.edu; Department of Mathematics, Texas State University, 601 University Drive, San Marcos, TX 78666, USA.}
}
\date{}

\begin{document}
\maketitle

\begin{abstract}
We study prime graph complements of $\A_n$-solvable groups, focusing on the cases $\A_{11}$ and $\A_{13}$. These provide the first classifications of $T$-solvable groups for which $|\pi(T)|>4$. We develop structural and representation-theoretic tools for analyzing the possible prime graph complements and use them to obtain complete characterizations for both $\A_{11}$-solvable and $\A_{13}$-solvable groups. The methods introduced also provide a framework for studying analogous classification problems for larger alternating groups.
\end{abstract}
 
\hfill\break
\textbf{Keywords:} Gruenberg-Kegel graph, prime graph complement, alternating group, fixed points, Brauer characters, T-solvable

\hfill\break
\textbf{Mathematics Subject Classification:} 20D60, 20C15, 20C20

\section*{Introduction}

For a finite group $G$, let $\pi(G)$ be the set of prime divisors of $\vert G \vert$. The \textit{prime graph} (or \textit{Gruenberg-Kegel graph}) $\Gamma(G)$ of $G$ is a simple graph with vertex set $\pi(G)$ and an edge $p-q \in \Gamma(G)$ if and only if there exists an element of order $pq$ in $G$. We focus our study on the complement of this graph, denoted $\GK{G}$. In \cite{GRUBER2015397}, the last author and his collaborators began the study of the prime graph for infinite families of groups by characterizing the prime graph complements of solvable groups. We state their result below as a theorem. 
\begin{theorem}[{\cite[Theorem 2.10]{GRUBER2015397}}]
\label{thm: solvable case}
    An unlabeled graph $\Xi$ is isomorphic to the complement of the prime graph of some solvable group if and only if $\Xi$ is 3-colorable and triangle-free.
\end{theorem}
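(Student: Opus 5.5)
The proof has two directions: necessity (triangle-free and 3-colorable) and sufficiency (a construction).

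\emph{Necessity.} Let $G$ be solvable. For triangle-freeness, take any three primes $p,q,r\in\pi(G)$. By Hall's theorem $G$ has a Hall $\{p,q,r\}$-subgroup $H$, which is solvable. I would show that $H$ has an element of order the product of two of these primes. If no such element existed, $\Gamma(H)$ would have no edges, so every element of $H$ would have prime-power order. Solvable groups all of whose elements have prime-power order (EPPO groups, classified by Higman) have at most two prime divisors, a contradiction. Hence some pair among $p,q,r$ is adjacent in $\Gamma(G)$.

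For 3-colorability, I would use the Fitting subgroup $F=F(G)$ and the chain $F\le F_2(G)\le\cdots$. Color a prime $p$ by the first layer $F_i/F_{i-1}$ whose order it divides; a more refined version uses the Frobenius/2-Frobenius structure forced when $p$ and $q$ are nonadjacent. The key lemma is: if $p$ and $q$ are nonadjacent in $\Gamma(G)$, then a Hall $\{p,q\}$-subgroup is Frobenius or 2-Frobenius. From this, $p$ and $q$ cannot both divide $|F(G)|$, since nilpotent parts commute and would give an element of order $pq$. The coloring should assign class 1 to primes dividing $|F(G)|$, class 2 to primes dividing $|F_2(G)/F(G)|$, and class 3 to the remaining primes. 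One then checks that each color class is a clique in $\Gamma(G)$. For the third class this needs the fact that the relevant section of $G$ acts faithfully on $F$, together with the structure of 2-Frobenius groups: at most three Fitting layers are involved in any nonadjacency. I expect this step to be the main obstacle. The danger is the top layer, where primes above $F_2$ might still be pairwise nonadjacent. To handle it, I would first show $G/F_2(G)$ is small relative to nonadjacent pairs: if $p$ divides $|G/F_2(G)|$, a $p$-element acts nontrivially on a layer below and centralizes some element of each other prime there, giving adjacency.

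\emph{Sufficiency.} Given a triangle-free, 3-colorable graph $\Xi$, fix a 3-coloring with classes $C_1,C_2,C_3$. Assign distinct primes to the vertices, chosen via Dirichlet's theorem so that the needed divisibilities hold. Each nonedge $p-q$ of $\Xi$ is an edge of the complement, and the goal is to realize each such edge by a Frobenius-type action. I would build $G$ as an iterated semidirect product $V_1\rtimes(V_2\rtimes V_3)$, where $V_i$ is an elementary abelian product over the primes in $C_i$. The primes are chosen so that $q\mid p^{k}-1$ whenever $q$ should act fixed-point-freely on a $p$-group, and the actions are arranged so that a $q$-element acts fixed-point-freely on the $p$-part exactly when $p$ and $q$ are adjacent in $\Xi$, and trivially otherwise. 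Taking direct products of such Frobenius-type pieces, and verifying that elements of order $pq$ exist exactly for the required pairs, finishes the construction. Triangle-freeness is what lets the fixed-point-free conditions be imposed simultaneously.
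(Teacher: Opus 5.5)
Your triangle-freeness argument is correct: take a Hall $\{p,q,r\}$-subgroup and apply Higman's theorem on solvable groups all of whose elements have prime-power order. The Frobenius/2-Frobenius lemma is also the right ingredient. But the 3-coloring you build from it is wrong, and it fails exactly at the step you flagged.

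Coloring a prime by the first Fitting layer it divides is proper within each layer, since primes dividing the same nilpotent section $F_i(G)/F_{i-1}(G)$ are adjacent in $\Gamma(G)$. Lumping every prime above $F_2(G)$ into one class is not proper. Let $K=\F_8\rtimes(C_7\rtimes C_3)=\mathrm{A\Gamma L}(1,8)$, let $V$ be a faithful $\F_5K$-module (e.g.\ the regular one), and put $G=V\rtimes K$. Then $F(G)=V$ and $F_2(G)=V\rtimes\F_8$, so $3$ and $7$ both fall in your third class. Yet $F_2(G)$ is a $\{3,7\}'$-group and $G/F_2(G)\cong C_7\rtimes C_3$ is Frobenius, so $G$ has no element of order $21$, i.e.\ $3-7\in\GK{G}$.

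Your proposed remedy does not touch this. The $7$-elements act fixed-point-freely on the layer $\F_8$ below them (so even $2-7\in\GK{G}$), and adjacency to lower-layer primes says nothing about two primes that both first appear high up. Since Fitting height is unbounded, no scheme based on the Fitting series of $G$ can give three colors.

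The paper itself proves nothing here; it quotes the result from Gruber--Keller--Lewis--Naughton--Strasser. What works is an orientation rather than a layering. For $p-q\in\GK{G}$, set $p\to q$ when $F(H)$ is a $p$-group, where $H$ is the (Frobenius or 2-Frobenius) Hall $\{p,q\}$-subgroup. Then $p\to q$ forces the Sylow $q$-subgroups to be cyclic or generalized quaternion. There is no directed path $a\to b\to c\to d$:
\begin{enumerate}
\item In a Hall $\{b,c,d\}$-subgroup $L$, the arrows $b\to c$ and $c\to d$ give $O_c(L)=O_d(L)=1$, so $F(L)$ is a $b$-group.
\item Because of $a\to b$, $F(L)$ is cyclic or generalized quaternion.
\item The arrow $c\to d$ gives a nonabelian $\{c,d\}$-group (a $c$-subgroup with a $d$-element acting fixed-point-freely on it) acting faithfully on $F(L)$.
\item This is impossible: automorphism groups of cyclic groups are abelian, and those of generalized quaternion groups are $2$-groups or $\Sn{4}$.
\end{enumerate}
Coloring each vertex by the length of the longest directed path ending at it then gives the 3-coloring.

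Your sufficiency sketch also contains a concrete error: the rule ``fixed-point-freely when adjacent, trivially otherwise'' cannot be imposed. Suppose $x_1-x_2-x_3$ is a path with $x_i\in C_i$ and $x_1-x_3\notin\Xi$. The $x_3$-part $R$ acts fixed-point-freely on the $x_2$-part $Q$, so $[Q,R]=Q$. The kernel of the action of $V_2\rtimes V_3$ on the $x_1$-part is normal. So if it contains $R$, it contains $Q$, contradicting that $Q$ acts fixed-point-freely there.

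This configuration is unavoidable in general. In every proper 3-coloring of $C_5$, each color class contains a vertex whose two neighbors lie in the other two classes. So whichever class you place in the middle layer, the configuration occurs, and your construction as stated cannot realize $C_5$. Nonadjacent primes must be allowed to act nontrivially but with fixed points. For instance, in coprime characteristic a Frobenius complement automatically has fixed points on a module on which its kernel acts fixed-point-freely. You must then check, with primes chosen via Dirichlet, that exactly the required element orders occur. That verification, which you defer, is the real content of the construction.
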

One method for classifying the prime graphs of infinite families of groups has since focused on groups whose composition factors are either abelian or are isomorphic to some fixed non-abelian simple group $T$. We call such groups $T$-solvable. It is well known that $6$ divides the order of all finite nonabelian simple groups with the exception of the Suzuki groups. Additionally, by Burnside, we know that all nonabelian simple groups have at least three prime divisors in their order \cite{isaacs_finite_2008}. As such, the study of $T$-solvable groups begins with groups $T$ where $\pi({T})=\{2,3,p\}$ for a prime $p$. Collectively, \cite{florez2022primegraphs} and \cite{HuangKellerKissingerPlotnickRomaYang+2024+89+117} began and completed the classification for $\An{5}$-solvable groups. Although seemingly the simplest possible choice of $T$, $\An{5}$ proved difficult in ways unique to its small size. In $2024$, \cite{edwards2022primegraphsgroupsarithmetically} classified the prime graph complements for all $T$-solvable groups such that $\vert\pi(T)\vert=3$. Furthermore, their results included a characterization of the prime graph complements of all such groups whose composition factors have at most three distinct prime divisors in their order. 

The transition from $T$-solvable groups with $\vert\pi(T)\vert=3$ to ones with $\vert\pi(T)\vert=4$ required substantially new techniques. Rather than relying on arguments tailored to individual simple groups, \cite{keller2024classifyingprimegraphsfinite}, \cite{keller2024classificationprimegraphsoperatornamesz8}, \cite{keller2025criteriaclassifyingprimegraphsPSL2q} developed a general framework for studying prime graphs of $T$-solvable groups through their subgroup structure, Schur multipliers, and representation theory. Collectively, these results completed the classification of prime graph complements for ``most''
$T$-solvable groups for $\vert\pi(T)\vert \le 4$ (the exception being one family of groups), while simultaneously producing general tools applicable well beyond the specific classifications for which they were originally developed.

In this paper, we take a step beyond this by studying $\A_{11}$- and $\A_{13}$-solvable groups (where $\A_{n}$ denotes the alternating group of degree $n$);
see \cref{A_11 Characterization,A_13 Characterization}, respectively.
These are the first classifications for $T$-solvable groups in which $\vert\pi(T)\vert>4$, with $\pi(\A_{11})$ containing five primes and $\pi(\A_{13})$ containing six. We first establish structural results for $\A_n$-solvable groups, both graph theoretically and representation theoretically. We use the new machinery of \cite{amrutha2024cyclic} and \cite{prasad2025minimal} to entirely eliminate the need for complex character tables. This reduces what was quickly becoming an infeasible amount of computation in the results of \cref{Sec: Representations of An}. 

Central in our developments is \cref{lma: Brauer realization theorem}, which generalizes the realization techniques of \cite[Corollary 2.0.2]{alland2025realizinggruenbergkegelgraphstsolvable} by allowing the construction of a single extension whose elementary abelian normal subgroup decomposes into modules corresponding to different Brauer characters over different characteristics. This removes the need to realize all required edges within a single module and substantially broadens the class of graphs that can be constructed. For the cases of $\A_{11}$ and $\A_{13}$, these realization results combine with edge elimination arguments to yield complete characterizations of the prime graph complements containing triangles. Consequently, every realizable graph of this type arises from the module constructions developed here, eliminating the need to consider more complicated extensions.\\
At the end of the paper, we also mention some open questions that arose along the way.

\section*{Preliminary Notions}

We will use the following notational conventions throughout the paper. Let $G$ be a group, $\Xi$ be a graph, $V$ be a set of vertices, and $E$ be a set of edges. 
\begin{enumerate}

    \item We say $G$ is \textit{strictly} $T$-solvable for a nonabelian simple group $T$ when $G$ is $T$-solvable and there exists a composition factor of $G$ isomorphic to $T$.

    \item We will use atlas notation to describe extensions and normal series of finite groups. In particular, if $G$ has a normal series with factors $X_1,\ldots,X_k$, we may write $G\cong X_1.X_2.\dots.X_k$.
    
    \item We write $m.G$ to denote a perfect central extension of $G$ by a cyclic group of order $m$. In particular, $2.\A_n$ denotes the unique Schur double cover of $\A_n$.
    
    \item We write $\GK{G}$ to denote the graph theoretic complement of the prime graph of $G$.
    
    \item If $\Xi$ is a graph and $p,q\in\Xi$, then we write $p-q\in\Xi$ to indicate that $\{p,q\}$ is an edge of $\Xi$.
    
    \item We write $\Xi[V]$ to denote the induced subgraph of $\Xi$ on the vertices in $V$.
    
    \item We write $\Xi\setminus V$ to denote the induced subgraph of $\Xi$ obtained by deleting the vertices in $V$. Respectively, we write $\Xi\setminus E$ to denote the subgraph of $\Xi$ obtained by deleting the edges in $E$.
    
    \item If $\mathcal P$ is a set of vertices of a graph $\Xi$, then $N_{\Xi}(\mathcal P)$ denotes the set of vertices in $\Xi\setminus\mathcal P$ adjacent to at least one vertex in $\mathcal P$. When the graph is clear from context, we write $N(\mathcal P)$.
    
    \item We write $\IBr_p(G)$ to denote the set of irreducible $p$-Brauer characters of $G$. Thus, if $\chi\in\IBr_p(G)$, then $\chi$ is an irreducible Brauer character of $G$ in characteristic $p$.

\end{enumerate}

\noindent It is easily seen that $\GK{G}[\pi(H)]$ can at most contain the edges in $\GK{H}$ for any section $H$ of $G$. Specifically, $\GK{G}[\pi(H)] \subseteq \GK{H}$; this is a fact that we will employ readily and without reference. 

\medskip
From \cite{keller2024classifyingprimegraphsfinite} we get the Frobenius criterion, which characterizes the subgroups that can act as a Frobenius complement.
\begin{definition}[Frobenius Criterion \cite{keller2024classifyingprimegraphsfinite}]
\label{Frobenius Criterion}
    A $p$-group is said to satisfy the \textit{Frobenius Criterion} if it is cyclic, dihedral (including the Klein-4 group), or generalized quaternion.
\end{definition}
\noindent Every subgroup of any such group also satisfies the Frobenius criterion. Notably, since $\A_n \le \A_m$ for $n \le m$, we know that if a Sylow $p$-subgroup of $\A_n$ does not satisfy the Frobenius criterion, neither will the Sylow $p$-subgroups of $\A_m$ for all $m>n$. We will repeatedly use the following consequence of the Frobenius criterion to rule out edges in the prime graph complement.
 
\begin{proposition}[{\cite[Lemma 2.1.2 and Proposition 2.2.2]{keller2024classifyingprimegraphsfinite}}]
\label{prop: frobenius criterion consequence}
    
Let $G$ be a $T$-solvable group and $\pi(T)=\pi(\text{Aut}(T))$. Assume the Sylow $r$-subgroups of $T$ do not satisfy the Frobenius criterion for some
$r \in \pi(T)$. For all $p \in \pi(G) \setminus \pi(T)$, we have
$r-p \notin \GK G$.
\end{proposition}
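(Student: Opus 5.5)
We argue by contradiction. Fix $p\in\pi(G)\setminus\pi(T)$ and suppose $r-p\in\GK G$, so $G$ contains no element of order $rp$. The plan is to find a section of $G$ in which a $p$-element acts on an $r$-group containing a copy of a Sylow $r$-subgroup of $T$, and then use the fact that a $p$-group acting fixed-point-freely forces the $r$-group to be a Frobenius complement.

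\textbf{Reduction to a minimal section.} Since $G$ is $T$-solvable and $r\in\pi(T)$, $G$ has a chief factor $N/M\cong T^k$ for some $k\ge 1$. Since $p\notin\pi(T)$, the prime $p$ divides $|G/N|$ or $|M|$. We pass to a subgroup and quotient so that $p$ sits in an abelian (solvable) section adjacent to a $T$-section.

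\textbf{Case 1: $p$ divides $|M|$.} Choose a chief factor $V$ of $G$ below $N$ that is an elementary abelian $p$-group; this is possible because $p\notin\pi(T)$, so the $p$-part lies in abelian chief factors. Work in a section $V.T$, or in a section $H$ with $V\trianglelefteq H$ and $H/C_H(V)$ involving $T$. Take $R$ a Sylow $r$-subgroup of this $T$-section. Lifting $R$ to an $r$-subgroup $\tilde R$ acting coprimely on $V$ (possible by Schur--Zassenhaus, since $p\ne r$), the absence of elements of order $rp$ forces every nontrivial element of $\tilde R$ to act fixed-point-freely on $V$. We must first handle the kernel: if the $r$-part acts trivially, we immediately get elements of order $rp$. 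Thus $\tilde R$, or rather its faithful image, which contains a copy of a Sylow $r$-subgroup of $T$ (a quotient of $\tilde R$ by a normal $r$-subgroup), is a Frobenius complement. Hence its Sylow subgroups are cyclic or generalized quaternion, and so satisfy the Frobenius criterion. Subgroups and, for these classes, the relevant sections inherit this property. This contradicts the assumption on $T$.

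\textbf{Case 2: $p$ divides $|G/N|$.} Here a $p$-element $x$ acts on $N/M\cong T^k$. We use $\pi(\mathrm{Aut}(T))=\pi(T)$: then $x$ cannot induce a nontrivial outer automorphism on a factor, so $x^{\langle\rangle}$ permutes the $k$ factors. Replacing $x$ by a suitable power of order $p$, we take $x$ of order $p$ modulo $M$. If $x$ normalizes some factor, it acts as an inner automorphism of order coprime to $|T|$, hence trivially, and centralizes an $r$-element, giving an element of order $rp$. Otherwise $x$ permutes $p$ factors cyclically, and the diagonal $\{(t,t^x,\dots)\}$ gives centralized $r$-elements of $T^p$. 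In both cases we lift to $G$ via a coprime-action argument (Glauberman / fixed points of $p'$-quotients lift, $C_{N/M}(x)=C_N(x)M/M$) to produce an element of order $rp$ in $G$, a contradiction. Notably, this case does not even need the Frobenius hypothesis.

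\textbf{Main obstacle.} The delicate point is Case 1: ensuring that the faithful action on $V$ really contains a full Sylow $r$-subgroup of $T$, rather than just a section, when the $T$-section lies above $V$ with intermediate solvable layers. If a central-extension issue arises, as with Schur covers $m.T$, the action may be by a cover of $T$. In that case I would invoke the known fact (\cite{keller2024classifyingprimegraphsfinite}) that a Sylow $r$-subgroup of a perfect central extension of $T$ maps onto one of $T$. If it satisfied the Frobenius criterion, its quotient would too, since quotients of cyclic and generalized quaternion groups are cyclic or dihedral. This gives the contradiction.
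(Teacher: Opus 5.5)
The paper gives no proof of this proposition; it imports it from \cite{keller2024classifyingprimegraphsfinite}. Your two-case strategy is the natural one. A $p$-element above the chief factor $N/M\cong T^k$ centralizes a nontrivial $r$-element of $N/M$ because $\pi(\operatorname{Aut}(T))=\pi(T)$, while a $p$-chief factor below it forces a Sylow $r$-subgroup to act as a Frobenius complement. However, two steps fail as written, and a third is more complicated than it needs to be.

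First, the opening inference is false: $T$-solvability together with $r\in\pi(T)$ does not produce a chief factor $T^k$. In fact the proposition as quoted is false without such a factor. Take $T=\An{11}$ and $r=3$, whose Sylow subgroup is $C_3\wr C_3$. The Frobenius group $C_{13}\rtimes C_3$ is solvable, hence $\An{11}$-solvable, and yet $3-13\in\GK{C_{13}\rtimes C_3}$. You need $G$ to be \emph{strictly} $T$-solvable, which is the setting in which the result is used in \cref{bridging edges char}.

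Second, in Case 2 the identity $C_{N/M}(x)=C_N(x)M/M$ requires $\langle x\rangle$ to act coprimely on the kernel $M$; it is not enough that $N/M$ be a $p'$-group. Your case split allows $p\mid |M|$, and there the identity can fail. For example, let $G=(V\rtimes T)\times\langle z\rangle$ with $M=V$ a nontrivial irreducible $\F_p[T]$-module, $|z|=p$, and $x=wz$ for some $0\neq w\in V$. Then $x$ centralizes $N/M$, but $C_N(x)=V\rtimes C_T(w)$. No lifting is needed anyway: if $1\ne\bar x\in G/M$ commutes with a nontrivial $r$-element $\bar y\in N/M$, then $\bar x\bar y$ has order divisible by $rp$, and so does every preimage in $G$.

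Third, Case 1 goes through without the hedging about $H/C_H(V)$ involving $T$, without Schur--Zassenhaus (which does not apply, since $|M/B|$ may be divisible by $r$), and without Schur covers. Take $V=A/B$ with $A\le M$, and let $R$ be a Sylow $r$-subgroup of $G/B$.
\begin{itemize}
\item If some $1\ne y\in R$ fixed some $0\ne v\in V$, then $yv$ would have order divisible by $rp$, and this lifts to $G$. So every nontrivial element of $R$ acts fixed-point-freely on $V$. In particular $R$ acts faithfully, whether or not $T$ acts nontrivially on $V$.
\item By \cref{lem:CpCp-no-fpf}, $R$ contains no $C_r\times C_r$, so it is cyclic or generalized quaternion.
\item Since $R\cap(N/B)$ is a Sylow $r$-subgroup of $N/B$, it maps onto a Sylow $r$-subgroup of $N/M\cong T^k$. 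Hence a Sylow $r$-subgroup of $T$ is a section of $R$.
\end{itemize}
Cyclic, dihedral and generalized quaternion groups form a section-closed class, which is exactly why dihedral groups appear in the criterion. This gives the desired contradiction, and that closure property is all your ``main obstacle'' needs.
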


\noindent The Frobenius criterion will allow us to determine when fixed points can or cannot occur in the actions we consider. Furthermore, whenever we reference fixed points, we always mean \textit{nontrivial} fixed points.

\medskip
Classifications of prime graph complements for $T$-solvable groups have consistently treated edges among primes in $\pi(T)$ and edges connecting $\pi(T)$ to outside primes as distinct problems. For instance, \cite{GRUBER2015397,florez2022primegraphs,HuangKellerKissingerPlotnickRomaYang+2024+89+117} separately analyze the possible triangles on $\{2,3,5\}$ and the possible edges between $\{2,3,5\}$ and an arbitrary prime $r$. Similarly, \cite{edwards2022primegraphsgroupsarithmetically,keller2024classificationprimegraphsoperatornamesz8,keller2025criteriaclassifyingprimegraphsPSL2q} follow the same pattern for their respective choices of $T$. We formalize this distinction as follows.
\begin{definition}
    Let $G$ be a $T$-solvable group and $p-q \in \GK{G}$. We say $p-q$ is an \textit{internal} edge if $p,q \in \pi(T).$
    We say $p-q$ is a \textit{bridging} edge if $p \in \pi(T)$ and $q \not\in \pi(T).$ 
\end{definition}

The following result guarantees the existence of a $T$-solvable group with a specified prime graph complement when the internal edges and bridging edges are well-understood.
\begin{lemma}\cite[Lemma 0.13]{keller2025criteriaclassifyingprimegraphsPSL2q}
\label{lma: psl2q realization}
    Let $T$ be a nonabelian simple group, $E$ a strictly $T$-solvable group, $\Xi$ a graph, and $X$ a set of $|\pi(E)|$ vertices of $\Xi$. Suppose $\Xi \setminus X$ is triangle-free and $\Xi$ has a $3$-coloring such that vertices in $N(X) \setminus X$ are monochromatic. Further suppose that:
    \begin{enumerate}
        \item There exists a graph isomorphism $\phi$ from the subgraph of $\Xi[X]$ to $\GK{E}$.
    
        \item Given $v \in \Xi \setminus X$, there exists a complex irreducible representation $\rho$ of $E$ such that for all $x \in X$, $x-v \in \Xi$ if and only if elements of order $\phi(x)$ of $E$ act without fixed points in $\rho$.
    \end{enumerate}
    Then there exists a group $G$ such that $\Xi \cong \GK{G}$, and $G$ is $T$-solvable.
\end{lemma}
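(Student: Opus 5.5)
We plan to build $G$ in two layers: the given group $E$ at the top realizes the internal edges on $X$, and an elementary abelian normal subgroup attached underneath realizes the bridging edges and the edges among the outside vertices. Fix the $3$-coloring of $\Xi$ in which $N(X)\setminus X$ is monochromatic, say of color $1$. Set $Y=\Xi\setminus X$; it is triangle-free and $3$-colorable.

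\textbf{Step 1: realize $Y$ by a solvable group.} By \cref{thm: solvable case} there is a solvable group $S$ with $\GK{S}\cong Y$. Moreover, the construction in \cite{GRUBER2015397} is layered by color classes: it is a tower of Frobenius-type actions in which the primes of one color class occupy the top layer. We will arrange, via the coloring, that the primes in $N(X)\setminus X$ (color $1$) sit at the bottom layer as elementary abelian normal $v$-subgroups $V_v$ on which the rest of the group acts.

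\textbf{Step 2: attach $E$.} For each $v\in N(X)\setminus X$, hypothesis~(2) gives a complex irreducible representation $\rho_v$ of $E$. Reducing modulo $v$ (for suitable choices, e.g.\ after enlarging the field, or taking a large enough tensor power of modules over $\F_v$) yields an $\F_v E$-module $W_v$. In $W_v$, an element of order $\phi(x)$ acts fixed-point-freely exactly when it does so in $\rho_v$. This is the main obstacle. Since $v\notin\pi(E)$, the module is coprime, so Brauer and ordinary characters coincide and fixed-point dimensions are computed by the same character inner products. This transfers fixed-point-freeness faithfully. For $v$ not adjacent to $X$ we may use the trivial module or the regular module; the regular module kills all edges $x-v$. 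We will in fact use a regular-type module for every $v\in Y$ not in $N(X)$, ensuring $x-v\notin\GK{G}$. Form
\[
G=\Bigl(\prod_{v} W_v\Bigr)\rtimes (E\times S'),
\]
where $S'$ is the solvable part acting on its own layers, arranged so that $E$ and $S'$ act on disjoint components or commute appropriately. In the simplest arrangement, $G=(W\rtimes E)\times S''$ is avoided, since a direct product would kill every edge between $X$ and $Y$. Instead, $E$ acts on the bottom layer $V_v$ of $S$ via $W_v$, while the other primes of $S$ act on $V_v$ as in $S$. This requires that the actions commute, which we obtain by taking tensor products of the $E$-module with the $S$-module.

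\textbf{Step 3: verify $\GK{G}=\Xi$.} Edges inside $X$: $G/\text{(solvable normal part)}$ involves $E$, so $\GK{G}[X]\subseteq\GK{E}$. Conversely, elements of coprime orders in $E$ cannot acquire new commuting partners, because the bottom module only adds $v$-elements. Edges inside $Y$ are those of $S$, since $E$ acts on the tensor module and commutes with the $S$-structure; this needs care for pairs of primes in color $1$, which are non-adjacent anyway. Bridging: $x-v$ with $v\in N(X)$ holds iff $\phi(x)$-elements act fixed-point-freely on $W_v\otimes(\cdot)$, which by Step 2 matches hypothesis~(2). For $v\notin N(X)$ there is a commuting pair, since $E$ centralizes that layer. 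Finally, all composition factors are abelian or $T$ (from $E$), so $G$ is $T$-solvable. The hard part throughout is Step 2, together with guaranteeing that fixed-point-freeness survives the tensor product with the $S$-layer.
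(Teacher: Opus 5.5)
This lemma is not proved in the paper; it is quoted from \cite{keller2025criteriaclassifyingprimegraphsPSL2q}, so your outline has to be judged on its own. The design is the right one: realize $\Xi\setminus X$ by a layered solvable group whose bottom layer is the colour class containing $N(X)\setminus X$, reduce each $\rho_v$ modulo $v$, and let $E\times S_0$ act on $W_v\otimes V_v$. Step~2 is also fine. Since $v\nmid|E|$, the Brauer character of the reduction equals the ordinary character, so fixed-point dimensions agree. Realize the reduction over some $\F_{v^k}$ and restrict scalars to $\F_v$; drop ``tensor powers'', which change fixed points.

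The genuine gap is Step~1, on which everything else rests. \cref{thm: solvable case} is a bare existence statement about an unlabeled graph and gives you no structure. What you need is a solvable $S=V\rtimes S_0$ with the following properties: $V=\bigoplus_v V_v$ is an abelian normal Hall subgroup; each $V_v$ is an $\F_vS_0$-module; $\pi(V)$ is a colour class of your choosing; no prime occurs in two layers; all primes avoid $\pi(E)$; and $y-v\in\GK{S}$ exactly when every element of order $y$ in $S_0$ acts fixed-point-freely on $V_v$. Saying that the construction in \cite{GRUBER2015397} ``is layered'' asserts precisely this without proof. You must extract it from that construction or build it directly. One way: take $S_0=B\rtimes A$, with $A=\prod C_p$ over one of the other two colour classes and $B=\prod C_q$ over the remaining one. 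Choose primes via Dirichlet, outside $\pi(E)$, so that $p\mid q-1$ whenever $p-q\in\Xi$, and let $C_p$ act fixed-point-freely on $C_q$ for edges and trivially otherwise. For each bottom vertex $v$, factor out the $C_q$ with $q\notin N(v)$. In the quotient, the $C_q$ and $C_p$ adjacent to $v$ commute by triangle-freeness, and those $C_p$ become central, so together they generate a cyclic normal subgroup. Let $V_v$ be induced from a faithful linear character of it. Then exactly the neighbours of $v$ act fixed-point-freely on $V_v$.

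What you call the hard part is routine, but it must be written out, because hypothesis~(2) only concerns elements of $E$. Put $G=M\rtimes(E\times S_0)$ with $M=\bigoplus_v M_v$ a normal Hall subgroup, where $M_v=W_v\otimes V_v$ for $v\in N(X)$ and $M_v=V_v$ otherwise. By Schur--Zassenhaus, every element of order $\phi(x)$ is conjugate into $E\times S_0$, hence, by orders, into $E$; likewise every element of order $y\in\pi(S_0)$ is conjugate into $S_0$. All $v$-elements lie in $M_v$. Then two identities finish the bridging and outside edges:
\[
C_{W_v\otimes V_v}(g)=C_{W_v}(g)\otimes V_v \ (g\in E), \qquad C_{W_v\otimes V_v}(s)=W_v\otimes C_{V_v}(s) \ (s\in S_0).
\]
The first gives the bridging edges; the second shows the edges of $\Xi\setminus X$ are unchanged. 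Finally, $G/M\cong E\times S_0$ gives $\GK{G}[X]=\GK{E}$ and removes all edges between $X$ and $\pi(S_0)$.

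Also commit to the tensor construction. Letting $E$ and $S$ act on disjoint summands would make every $\phi(x)$-element centralize the $S$-summand and destroy all bridging edges. A ``regular-type module'' for vertices outside the bottom layer has nothing to act on and is unnecessary: $E$ already centralizes $S_0$ and acts trivially on $V_v$ for bottom vertices $v\notin N(X)$.
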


\cref{lma: psl2q realization} shows that once we have a group $E$ that realizes the internal edges we want, and representations with agreeable properties, every bridging edge from $\pi(E)$ to an outside prime can be attached via a module achieving the required Frobenius action. What the lemma does not supply is such a group $E$ in the first place, nor a way to assure there exists a representation which acts as intended. For $\An{n}$, we handle the latter in \cref{Sec: Representations of An}. As for the former, for a given nonabelian simple group $T$, we still need a way to realize an arbitrary prescribed pattern of internal edges on $\pi(T)$ without introducing any new prime vertices. We do so through the following result from \cref{Sec: Edges}:

\medskip\noindent\textbf{\cref{lma: Brauer realization theorem}.}
{\itshape
    Let $T$ be a nonabelian finite simple group. Then, for any collection of primes $R\subseteq\pi(T)$ and any choice of nonempty sets of irreducible Brauer characters of $T$ in the corresponding characteristics, there exists a $T$-solvable group $G$ whose prime graph complement is obtained from $\GK{T}$ by removing all edges between distinct primes in $R$, together with precisely those edges between $r\in R$ and $q\in\pi(T)$ for which some chosen Brauer character in characteristic $r$ has a nontrivial fixed point under an element of order $q$.
}

\medskip
By selecting irreducible Brauer characters at each prime $r \in R\subseteq\pi(T)$, it constructs an extension $G=V \rtimes T$ whose prime graph complement is $\GK{T}$ with a chosen set of internal edges deleted, where $V$ is assembled entirely from modules of characteristics already occurring in $\pi(T)$. Together, the two results let us realize internal and bridging edges in two independent steps.

\section{On the Prime Graph Complement of $\A_n$}
\label{Sec: Prime Graph Complement}

It is well known that if $G$ is a $T$-solvable group, then the internal edges of
$\GK{G}$ are contained in the edges of $\GK{T}$. Thus, an important first step towards our characterization is to understand the prime graph complement of $\A_n$. The following lemma gives necessary and sufficient conditions for the existence of edges in this graph.

\begin{lemma}\cite[Proposition 1.1]{VasilievVdovin2005}
\label{lma: A_n graph characterization}
    Let $n \geq 5$, and let $p,q \in \pi(\A_n)$ be distinct primes with $p>q$. Then $p-q \in \overline{\Gamma}(\A_n)$ if and only if one of the following conditions holds:
    \begin{enumerate}
        \item $p$ and $q$ are odd primes and $p+q > n$;
        \item $q=2$ and $p+4 > n$.
    \end{enumerate}
\end{lemma}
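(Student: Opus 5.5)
The statement is about element orders in $\A_n$: $p-q\in\GK{\A_n}$ exactly when $\A_n$ has no element of order $pq$. So I would first turn it into a combinatorial statement about cycle types. An element of order $pq$ exists in $\A_n$ if and only if some even permutation of degree at most $n$ has order exactly $pq$. I will then find the minimal number of points such a permutation must move, and compare that number with $n$.

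\textbf{Case (1): $p>q$ both odd.} Every cycle of a permutation of order $pq$ has length in $\{1,p,q,pq\}$, and the order is $pq$. So either there is a $pq$-cycle, or there is at least one $p$-cycle and at least one $q$-cycle. All these lengths are odd, so the cycles are even permutations and parity imposes no restriction.
\begin{itemize}
\item The cheapest option is a $p$-cycle times a disjoint $q$-cycle, moving $p+q$ points.
\item A $pq$-cycle moves $pq\ge p+q$ points, so it is never cheaper.
\end{itemize}
Hence an element of order $pq$ exists if and only if $p+q\le n$, and the edge is present if and only if $p+q>n$.

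\textbf{Case (2): $q=2$.} Now the cycle lengths lie in $\{1,2,p,2p\}$, and the permutation must be even. Its order must involve both $2$ and $p$, so there is a cycle of length $p$ or $2p$ and a cycle of even length. The candidates are:
\begin{itemize}
\item A $2p$-cycle alone is odd. It must be paired with a transposition or a second $2p$-cycle, which costs at least $2p+2>p+4$ points.
\item A $p$-cycle together with an odd number of transpositions is odd, so at least two transpositions are needed. A $p$-cycle with two disjoint transpositions is even and moves $p+4$ points.
\item Any other combination uses at least $p+4$ points.
\end{itemize}
So an element of order $2p$ exists if and only if $p+4\le n$, and the edge is present if and only if $p+4>n$.

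The proof also needs to handle the non-minimal configurations, for instance a $p$-cycle together with a $2p$-cycle. Each of these moves more points than the minimal representative found above, so none of them can exist when the minimal one does not. This careful minimization, especially the parity accounting when $q=2$, is the only real obstacle. Everything else is routine, and the hypothesis $n\ge5$ just ensures that $\A_n$ is simple and that $p$ and $q$ lie in $\pi(\A_n)$.
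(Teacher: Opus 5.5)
Your argument is correct. The minimal support of an even permutation of order $pq$ is $p+q$ when $p,q$ are odd, since a $pq$-cycle costs $pq\ge p+q$. It is $p+4$ when $q=2$, since parity forces an even number of even-length cycles and $2p+2>p+4$ for $p\ge 3$. This is exactly the elementary cycle-type count behind Proposition~1.1 of Vasil'ev--Vdovin.

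The paper gives no proof of its own; it simply quotes that proposition, so your write-up supplies the argument behind the citation. One small correction: $n\ge 5$ plays no role in your count, and $p,q\in\pi(\mathcal{A}_n)$ is a hypothesis rather than a consequence of $n\ge 5$; that bound is there only so that $\mathcal{A}_n$ is simple.
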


\noindent In the following result, the symbol $\chi$ denotes the chromatic number of a graph. Throughout the remainder of the paper, $\chi$ denotes the character of a representation.

\begin{corollary}
\label{chromatic number}
    Let $n\ge 9$, $p_i$ be the smallest prime such that $p_i+p_{i+1}>n$, and $P_>(n):=\{p \text{ prime} \mid p_i \le p \le n\}$. 
    Then, $\chi(\GK{\A_n})=\vert P_>(n) \vert$. In particular, for all $k\in\N$, there exists $n\in \N$ such that $\chi(\GK{\A_n})\geq k$. 
\end{corollary}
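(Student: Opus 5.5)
We prove the formula by exhibiting a clique of size $\vert P_>(n)\vert$ in $\GK{\A_n}$ and then a proper colouring with that many colours. Both directions use only \cref{lma: A_n graph characterization} and the minimality of $p_i$. Write $p_1=2<p_2=3<p_3=5<\cdots$ for the primes in increasing order, and recall that $\pi(\A_n)$ is exactly the set of primes $\le n$.

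\textbf{Preliminaries.} Since $n\ge 9$, we have $p_1+p_2=5\le n$ and $p_2+p_3=8\le n$, so $p_i\ge 5$; in particular $p_i$ is odd and $p_{i-1}\ge 3$. By Bertrand's postulate there is a prime in $(n/2,n]$. Any two primes in this interval sum to more than $n$, and if only one prime lies there, it together with the next prime also sums to more than $n$. Hence $p_i$ exists and $p_i\le n$, so $P_>(n)\ne\emptyset$ and $P_>(n)\subseteq\pi(\A_n)$.

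\textbf{Lower bound: $P_>(n)$ is a clique.} Take $q<p$ in $P_>(n)$. Both are odd because $p_i\ge 5$. Since $q\ge p_i$ and $p>q$, we get $p\ge p_{i+1}$, so $p+q\ge p_i+p_{i+1}>n$. By \cref{lma: A_n graph characterization}(1), $p-q\in\GK{\A_n}$. Therefore $\chi(\GK{\A_n})\ge\vert P_>(n)\vert$.

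\textbf{Upper bound: a colouring with $\vert P_>(n)\vert$ colours.} Let $S=\pi(\A_n)\setminus P_>(n)$, the primes smaller than $p_i$. Minimality of $p_i$ gives $p_{i-1}+p_i\le n$.
\begin{itemize}
\item $S$ is independent. For odd $q<q'$ in $S$ we have $q+q'\le p_{i-2}+p_{i-1}<p_{i-1}+p_i\le n$. For $2$ and $q'\in S$ we have $q'+4\le p_{i-1}+p_i\le n$, using $p_i\ge 5$.
\item $p_i$ has no neighbour in $S$. For odd $q\in S$ we have $p_i+q\le p_i+p_{i-1}\le n$. For $q=2$ we have $p_i+4\le p_i+p_{i-1}\le n$, using $p_{i-1}\ge 3$.
\end{itemize}
Hence $\pi(\A_n)=P_>(n)\sqcup S$, with $S$ independent and not adjacent to $p_i$. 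Give each prime of $P_>(n)$ its own colour, and give every prime of $S$ the colour of $p_i$. By the two bullet points this is a proper colouring, so $\chi(\GK{\A_n})\le\vert P_>(n)\vert$, and equality follows.

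\textbf{The ``in particular'' statement.} We show that every prime in $(n/2,n]$ lies in $P_>(n)$. Suppose $p\in(n/2,n]$ is prime but $p<p_i$. Then $p\le p_{i-1}$, so $p_{i-1}+p_i>n/2+n/2=n$, contradicting minimality of $p_i$. Thus $\vert P_>(n)\vert\ge \pi(n)-\pi(n/2)$. By the prime number theorem this quantity is asymptotic to $n/(2\log n)\to\infty$, so $\chi(\GK{\A_n})\ge k$ for all sufficiently large $n$.

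The only delicate point is the choice of $p_i$ as the clique vertex that absorbs $S$. The largest prime $\le n$ would not work, since it can be adjacent to small primes. The bookkeeping for the prime $2$, which is where the hypothesis $n\ge 9$ forcing $p_i\ge 5$ is needed, is the step most in need of care.
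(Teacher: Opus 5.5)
Your proof takes a different, more elementary route than the paper, and it is correct apart from one faulty inequality.

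\textbf{Comparison with the paper.} The paper assigns weights $w(2)=4$ and $w(p)=p$ for odd $p$, and observes that $\GK{\A_n}$ is then a threshold graph with threshold $n$. It asserts that the maximum clique is $P_>(n)$ and concludes $\chi=\omega$ from the theorem that threshold graphs are perfect. It also states $\vert P_>(n)\vert\ge P(n)-P(n/2)$ without justification.

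You instead exhibit the clique and an explicit colouring with $\vert P_>(n)\vert$ colours. Your key observation is that the remaining primes form an independent set $S$ with no neighbour at $p_i$, which is the split structure of the threshold graph made explicit. You also prove the lower bound on $\vert P_>(n)\vert$ via the primes in $(n/2,n]$. Your version is self-contained and needs no perfect-graph citation. It also checks the points the paper leaves implicit, in particular where the vertex $2$ (of weight $4$) sits.

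\textbf{The slip.} In your second bullet, for $q=2$ you write $p_i+4\le p_i+p_{i-1}$ ``using $p_{i-1}\ge 3$''. That inequality needs $p_{i-1}\ge 4$. It is false when $p_{i-1}=3$, i.e.\ when $p_i=5$, which is the case for $n\in\{9,10,11\}$.

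The conclusion still holds. If $p_{i-1}\ge 5$, your inequality is fine. If $p_i=5$, then directly $p_i+4=9\le n$.

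This is exactly where the hypothesis $n\ge 9$ is used, contrary to your closing remark: forcing $p_i\ge 5$ only needs $n\ge 8$. The hypothesis cannot be weakened. For $n=8$ one has $p_i=5$, and $\{2,5,7\}$ is a triangle in $\GK{\A_8}$, so $\chi(\GK{\A_8})=3\neq 2=\vert P_>(8)\vert$.

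With this two-case check inserted, your proof is complete.
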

\begin{proof}
    Define $w:\pi(\A_n)\to \R$ by $w(2)=4$ and $w(p)=p$ for every odd prime $p\in\pi(\A_n)$. Then, for distinct primes $p,q\in\pi(\A_n)$, we have $p-q\in \GK{\A_n}$ if and only if $w(p)+w(q)>n$. Hence $\GK{\A_n}$ is a threshold graph, with weight function $w$ and threshold $n$. 
    
    Since adjacency in $\GK{\A_n}$ is determined by the condition $w(p)+w(q)>n$, vertices with larger weights have at least as many neighbors as vertices with smaller weights. Hence a clique of maximum size is obtained by taking the largest possible set of vertices whose two smallest weights still sum to more than $n$; for $n\ge 9$, this set is $P_>(n)$. Thus, the largest clique is $\GK{\A_n}[P_>(n)]$. And, since threshold graphs are perfect, their chromatic number equals their clique number \cite{MahadevPeled}. 
    
    For $x\in\N$, define $P(x)$ to be the prime counting function. By the Prime Number Theorem, $P(n) \sim \frac{n}{\log{n}}$. So, $P(n)-P(n/2) \sim \frac{n}{\log{n}}-\frac{n/2}{\log{n/2}} \sim \frac{n}{2\log{n}}$. Since $\vert P_>(n)\vert \geq P(n)-P(n/2)$, it follows that $\vert P_>(n)\vert$ grows without bound. Thus, we have that for any $k \in \N$ there exists $n$ such that $\vert P_>(n) \vert \geq k$, which implies $\chi(\GK{\A_n}) \geq k$.
\end{proof}

To illustrate the above, consider the prime graph complement of $\An{17}$: 
\begin{center}
    \begin{tikzpicture}[
        vertex/.style={circle,draw,fill=white,minimum size=8mm},
        every edge/.style={thick}
    ]
        
        \node[vertex] (5)  at (3, 3) {$5$};
        \node[vertex] (2)  at (-3, 3) {$2$};
        \node[vertex] (3)  at (-2.5, 1.5) {$3$};
        
        \node[vertex, fill=red!40, line width=0.8pt, draw=white] (7)  at (-1.5, 0.40192379) {$7$};
        \node[vertex, fill=red!40, line width=0.8pt, draw=white] (11) at (0, 1.28) {$11$};
        \node[vertex, fill=red!40, line width=0.8pt, draw=white] (13) at (1.5, 0.40192379) {$13$};
        \node[vertex, fill=red!40, line width=0.8pt, draw=white] (17) at (0, 3) {$17$};
        
        \draw (17)--(2);
        \draw (17)--(3);
        \draw (17)--(5);
        \draw (5)--(13);

        \draw[red, line width=2pt] (17)--(7);
        \draw[red, line width=2pt] (17)--(11);
        \draw[red, line width=2pt] (17)--(13);
        \draw[red, line width=2pt] (7)--(11);
        \draw[red, line width=2pt] (7)--(13);
        \draw[red, line width=2pt] (11)--(13);
    
    \end{tikzpicture}
\end{center}

\section{Representations of $\A_n$}
\label{Sec: Representations of An}

Note that since \cite{edwards2022primegraphsgroupsarithmetically} and \cite{keller2024classifyingprimegraphsfinite} have completed the classification of prime graph complements of $\A_n$-solvable groups for $n \leq 10$, we simplify many of our general results by implementing lower bounds on $n$. 

\medskip

Understanding which internal and bridging edges can occur in the prime graph complements of $\A_n$-solvable groups is required in our approach to characterizing these graphs. For the prime graph complements of a $T$-solvable group, \cref{lma: psl2q realization} and \cite[Corollary 2.0.2]{alland2025realizinggruenbergkegelgraphstsolvable} relate the existence of these edges to the complex and Brauer representations of certain extensions of T. For $T\in\{\An{11}, \An{12}, \An{13}, \An{14}, \An{15}, \An{16}\}$, in the case of internal edges, \cite[Theorem A]{alland2025realizinggruenbergkegelgraphstsolvable} restricts the set of relevant extensions to Schur covers and extensions above by subgroups of $\Out(T)$. (This restriction is conjectured to hold for all $\A_n$.)

\medskip
\noindent
Specifically, we want to understand when elements of order $p$ in $\Sn{n}$, $\A_n$, and $2.\A_n$ act without fixed points in a representation. A recently published result of Amrutha P., Amritanshu Prasad, and Velmurugan S. completely answers this question for representations of $\A_n$ over $\C$:

\begin{theorem}[{\cite[Theorem C]{amrutha2024cyclic}}]
\label{thm: A_n complex rep}
    Let $n>8$ and $p$ be a prime with $p\leq n$. There exists an irreducible complex representation of $\A_n$ such that every element of order $p$ acts without fixed points if and only if $n=p$.
\end{theorem}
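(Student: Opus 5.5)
The statement is \cite[Theorem C]{amrutha2024cyclic}, so the paper takes it as given. If I had to reprove it, I would separate the two directions and work with the standard criterion for fixed points of cyclic groups. An element $g$ of order $p$ acts without nontrivial fixed points in a representation with character $\chi$ exactly when the multiplicity of the trivial character of $\langle g\rangle$ in $\chi|_{\langle g\rangle}$ vanishes, that is, when
\[
\frac{1}{p}\sum_{k=0}^{p-1}\chi(g^k)=0 .
\]
The elements of order $p$ in $\A_n$ are products of $m$ disjoint $p$-cycles, $1\le m\le \lfloor n/p\rfloor$. These classes may split in $\A_n$, so the condition must be checked for every $\A_n$-class of such elements.

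\textbf{The case $n=p$.} I would take an irreducible constituent of the restriction to $\A_p$ of a hook character $\chi^{(p-k,1^k)}$ of $\Sn{p}$. All nonidentity powers of a $p$-cycle $g$ are again $p$-cycles, and the Murnaghan--Nakayama rule gives $\chi^{(p-k,1^k)}(g)=(-1)^k$. The fixed-point multiplicity is therefore
\[
\frac{1}{p}\Bigl(\binom{p-1}{k}+(p-1)(-1)^k\Bigr).
\]
This vanishes only in degenerate situations, so this naive choice does not work directly. Instead I would use the general fact that $\chi|_{\langle g\rangle}$ is a multiple of the regular character of $\langle g\rangle$ minus its trivial constituent only when $\chi(g^k)=-\chi(1)/(p-1)$. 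This is the behaviour of the character induced from a faithful linear character of a Sylow $p$-normalizer, so I would look for a single irreducible character, or one $\A_p$-constituent of a split character, that takes the constant value $-\chi(1)/(p-1)$ on all $p$-cycles. A natural candidate is the constituent of the restriction of the self-associate hook $\chi^{((p+1)/2,1^{(p-1)/2})}$. Its restriction to $\A_p$ splits into two characters whose values on $p$-cycles are $\tfrac12\bigl(\epsilon\pm\sqrt{\epsilon p}\bigr)$, where $\epsilon=(-1)^{(p-1)/2}$. Summing these over the $p-1$ nonidentity powers, which split evenly between the two $\A_p$-classes, and adding the degree $\tfrac12\binom{p-1}{(p-1)/2}$, I would check whether the total vanishes. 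If it does not, I would search among all $\A_p$-constituents for one that does, using the fact that only hook characters are nonzero on $p$-cycles.

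\textbf{The case $n>p$.} Here the direction is to show that every irreducible representation has a fixed point for some element of order $p$. I would use the element $g$ consisting of a single $p$-cycle. It lies in $\A_{p}\times \A_{n-p}$, and for $n\ge p+2$ it is centralized by $\langle g\rangle\times \A_{n-p}$. The key step, and the main obstacle, is the following. Restricting $\chi$ to $\Sn{p}\times\Sn{n-p}$ via the branching (Littlewood--Richardson) rule, I would show that some constituent $\lambda\otimes\mu$ has $\lambda|_{\langle g\rangle}$ containing the trivial character. This is forced unless every $\lambda$ that appears has no fixed vector, and that happens only for very few $\lambda$ (essentially the ones found above). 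I would then argue that for $n>8$ the restriction always contains some other $\lambda$ that does have a fixed vector.

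The small cases $n=p+1$, together with split characters of $\A_n$, would be handled through $\Sn{n}$-characters and the Murnaghan--Nakayama rule. The hypothesis $n>8$ is there to exclude small exceptions. I expect the combinatorial verification in this step to be the hardest part.
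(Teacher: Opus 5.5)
Your treatment of $n=p$ contains an outright error. Your formula $\frac{1}{p}\bigl(\binom{p-1}{k}+(p-1)(-1)^k\bigr)$ vanishes exactly for $k\in\{1,p-2\}$. These are not degenerate failures; they are the answer. The character $\chi^{(p-1,1)}$ is not self-conjugate for $p>3$, so it restricts irreducibly to $\A_p$. It is the character of the augmentation submodule of the permutation module, on which a $p$-cycle has as eigenvalues exactly the nontrivial $p$-th roots of unity; this is the characteristic-zero version of \cref{cor:Ap}. Your own criterion $\chi(g^k)=-\chi(1)/(p-1)$ points straight at it: hooks take the values $\pm1$ on $p$-cycles, so the criterion forces $\chi(1)=p-1$ and $\chi(g)=-1$. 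The candidate you switch to does not work. With $\epsilon=(-1)^{(p-1)/2}$, the total you propose to compute is $\tfrac12\binom{p-1}{(p-1)/2}+\tfrac{(p-1)\epsilon}{2}$. This is positive for every $p\ge5$ (it equals $5$ for $p=5$ and $7$ for $p=7$), so both constituents of the self-associate hook have fixed points. As written, the existence direction rests on a failing candidate and an unspecified search.

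For $n>p$ your architecture is right, and the step you flag as hardest is short once the obstruction set is pinned down; your sketch leaves it vague. By iterated branching (Littlewood--Richardson is not needed), the constituents of $\chi^\nu|_{\Sn{p}}$ are exactly the $\chi^\lambda$ with $\lambda\subseteq\nu$ and $|\lambda|=p$. Non-hooks vanish on $p$-cycles and so have fixed-point multiplicity $f^\lambda/p>0$. Hence the only $\lambda\vdash p$ with no $g$-fixed vector are $(p-1,1)$ and $(2,1^{p-2})$. If $|\nu|>p$ and $\nu\supseteq(p-1,1)$, then $\nu$ also contains $(p)$, $(p-2,2)$ or $(p-2,1,1)$, none of which is bad when $p\ge5$; the conjugate case is symmetric.

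To pass to $\A_n$, split by $n$. For $n\ge p+2$ the class of $g$ does not split, so $\chi^{\pm}$ agree on $\langle g\rangle$ and each inherits half of a positive multiplicity. For $n=p+1$ only $\nu=\bigl(\tfrac{p+1}{2},2,1^{(p-3)/2}\bigr)$ needs a direct check, which gives $\frac1p\bigl(\tfrac{f^\nu}{2}+\tfrac{(p-1)\epsilon}{2}\bigr)>0$ since $f^\nu\ge n-1=p$. For $p\in\{2,3\}$ it is simplest to invoke \cref{cor:Cr-leq-n/2-obstruction}, as $n>8\ge 2p$. For $p=2$ the single-cycle argument is not even available, since a transposition is odd.

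With these repairs your route becomes a short, self-contained proof of exactly what the paper needs. The paper itself gives no argument here: it imports the statement from \cite{amrutha2024cyclic}, where it is derived from general results on cyclic characters of $\A_n$.
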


We extend the above result from \cite{amrutha2024cyclic} to $2.\A_n$. In order to do so, we require a number of lemmas. We will begin by demonstrating that for a large class of odd primes $p\in \pi(\A_n)$, there will always be at least two elements, $g_1\in \A_n$ and $g_2\in 2.\A_n$, such that $g_1$ and $g_2$ have order $p$ and act with fixed points.

\begin{lemma}
\label{lem:CpCp-no-fpf}
    Let $p$ be a prime and let $F$ be a field with $\operatorname{char}(F)\neq p$. Then for every nontrivial $F[C_p\times C_p]$-module $V$, there exists a nontrivial element of $C_p \times C_p$ that acts with a fixed-point on the module.
\end{lemma}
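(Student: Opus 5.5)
The plan is to reduce to irreducible modules over a field of characteristic different from $p$. Here $|C_p\times C_p|=p^2$ is invertible, so Maschke's theorem applies. It also costs nothing to enlarge the field: if $\bar F$ is an algebraic closure of $F$, then $V\otimes_F\bar F$ has a nonzero fixed vector of $g$ exactly when $V$ does. Indeed, $\ker(g-1)$ is computed by linear algebra over $F$, and its dimension does not change under field extension. So we may assume $F$ is algebraically closed with $\operatorname{char}(F)\neq p$.

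Write $A=C_p\times C_p$. By Maschke, $V=\bigoplus_i W_i$ with each $W_i$ irreducible. Since $A$ is abelian and $F$ is algebraically closed, each $W_i$ is one-dimensional. It is given by a homomorphism $\lambda_i\colon A\to F^\times$ whose image lies in the group $\mu_p(F)$ of $p$-th roots of unity. Because $\operatorname{char}(F)\neq p$, the polynomial $x^p-1$ is separable, so $\mu_p(F)$ is cyclic of order $p$.

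Since $V$ is nontrivial, at least one summand $W_i$ is nonzero. Take its character $\lambda=\lambda_i$. This is a homomorphism from $A$, of order $p^2$, into a group of order $p$, so $|\ker\lambda|\ge p$. In particular $\ker\lambda$ contains a nontrivial element $g$. Then $g$ acts as the identity on $W_i\neq 0$, so every nonzero vector of $W_i$ is a nontrivial fixed point of $g$ on $V$. Note that this works even if $V$ is a nonfaithful or trivial-action module, as long as $V\neq 0$; this is the reading of ``nontrivial'' I intend.

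The only delicate step is the passage to the algebraic closure. There I have to make sure that a fixed vector over $\bar F$ yields one over $F$. This follows because $\dim_F\ker_V(g-1)=\dim_{\bar F}\ker_{V\otimes\bar F}(g-1)$, as rank is invariant under field extension. Everything else is routine.
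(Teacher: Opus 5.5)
Your proof is correct, but it takes a different route from the paper.

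The paper proves the lemma in one line by citing Wolf's result that $C_p\times C_p$ can never be a Frobenius complement. This fits the Frobenius-criterion viewpoint used throughout the paper. However, it rests on a substantial structural theorem. It also leaves implicit the translation from an arbitrary field of characteristic $\neq p$, and a possibly non-faithful module, to the Frobenius-complement setting.

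You instead prove the needed special case directly. After extending scalars to $\bar F$ (your descent via invariance of rank is sound), a one-dimensional constituent gives a character $\lambda\colon C_p\times C_p\to\mu_p(\bar F)$. Since the target has order $p$, $\ker\lambda$ is nontrivial. This is elementary and self-contained. It handles finite and infinite fields and non-faithful modules uniformly. It also makes the mechanism transparent: a one-dimensional character of a noncyclic group of exponent $p$ lands in the cyclic group $\mu_p$, so it must have a nontrivial kernel.

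As a side remark, Maschke's theorem is not actually needed. Commuting operators on a nonzero finite-dimensional $\bar F$-space always have a common eigenvector; for instance, apply this to $F[A]v\otimes_F\bar F$ for any $v\neq 0$. Moreover, when $\operatorname{char}(F)=p$ the group $\mu_p(\bar F)$ is trivial. So your argument shows that the hypothesis $\operatorname{char}(F)\neq p$ is not even required for the conclusion.
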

\begin{proof}
    This follows from the fact that $C_p \times C_p$ can never act as the Frobenius complement of a Frobenius group \cite[Theorem 5.3.1]{wolf2011spaces}.
\end{proof}

\begin{corollary}
\label{cor:Cr-leq-n/2-obstruction}
    Let $n \geq 8$, $p \leq n/2$ be a prime, and $F$ be a field with $\operatorname{char}(F) \not= p$. Then for every irreducible representation of $\A_n$, $2.\A_n$, or $\Sn{n}$ over $F$, there exists an element of order $p$ that acts with a fixed point in the representation.
\end{corollary}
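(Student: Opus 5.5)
The plan is to exhibit, inside each of the three groups, a subgroup $E\cong C_p\times C_p$ all of whose nontrivial elements have order $p$. Then we restrict the given irreducible representation to $E$ and apply \cref{lem:CpCp-no-fpf}. If the restriction to $E$ is a trivial module, every element of $E$ fixes every nonzero vector, and the conclusion is immediate. So we may assume the restriction is a nontrivial $F[E]$-module. Since $\operatorname{char}(F)\neq p$, \cref{lem:CpCp-no-fpf} then gives a nontrivial $e\in E$ with a fixed point, and $e$ has order $p$. Thus everything reduces to constructing $E$ in each case.

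\textbf{Odd $p$.} Since $p\le n/2$, there are two disjoint $p$-cycles $a=(1\,2\,\cdots\,p)$ and $b=(p+1\,\cdots\,2p)$ in $\A_n\le\Sn{n}$. They commute, are even, and generate $E\cong C_p\times C_p$, which covers $\A_n$ and $\Sn{n}$. For $2.\A_n$, let $\widetilde E$ be the full preimage of $E$. It is a central extension of $C_p\times C_p$ by $C_2$ with coprime orders, so it splits as $C_2\times(C_p\times C_p)$. Its Sylow $p$-subgroup is again $C_p\times C_p$ with all nontrivial elements of order $p$, and the same argument applies.

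\textbf{$p=2$, groups $\A_n$ and $\Sn{n}$.} Here $n\ge 8$ gives the commuting even involutions $(1\,2)(3\,4)$ and $(5\,6)(7\,8)$. Together they generate a Klein four-group $E\le\A_n\le\Sn{n}$, and the argument goes through directly.

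\textbf{$p=2$, group $2.\A_n$.} This is the step I expect to be the main obstacle, because a double transposition lifts to an element of order $4$ in $2.\A_n$. The preimage of the Klein four-group above is therefore quaternion, and \cref{lem:CpCp-no-fpf} cannot be applied to it. Instead, let $z$ be the central involution of $2.\A_n$, and let $t$ be a lift of $(1\,2)(3\,4)(5\,6)(7\,8)$. I would invoke the standard fact (Schur) that an element of cycle type $2^{k}$ lifts to an involution exactly when $k\equiv 0 \pmod 4$, so $t$ has order $2$. Then $\langle z,t\rangle\cong C_2\times C_2$, and all three of $z$, $t$, $zt$ have order $2$.

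If the representation is not faithful on $\langle z\rangle$, it factors through $\A_n$ and the earlier case applies. Otherwise $z$ acts as $-1$, and the restriction to $\langle z,t\rangle$ is nontrivial. Since $\operatorname{char}(F)\neq 2$, \cref{lem:CpCp-no-fpf} gives an element of $\{z,t,zt\}$ with a fixed point. This element cannot be $z$, so it is $t$ or $zt$, either of which is an element of order $2$ with a fixed point. This completes all cases.
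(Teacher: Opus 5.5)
Your argument is correct and is the paper's argument (find a copy of $C_p\times C_p$ in each group, restrict, and apply \cref{lem:CpCp-no-fpf}), with the details the paper omits filled in; in particular, your Klein four-group $\langle z,t\rangle\le 2.\A_n$ is exactly what the paper's bare appeal to the Sylow $2$-subgroup of $2.\A_n$ needs, and it is where the hypothesis $n\ge 8$ is genuinely used. Two slips, neither of which affects the proof: the preimage of $\langle(1\,2)(3\,4),(5\,6)(7\,8)\rangle$ is $C_4\times C_2$, not quaternion (the two lifts commute and each squares to $z$), and it already contains your $\langle z,t\rangle$; and in the non-faithful case the witness should just be $z$ itself, because the $\A_n$ argument could return a double transposition, whose lifts have order $4$.
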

\begin{proof}
    From the Sylow $p$-subgroups of $\Sn{n}, \A_n \text{, and } 2.\A_n$, we have subgroups in each that are isomorphic to $C_p\times C_p$. Then the restriction of any nonzero irreducible module $V$
    $$
    V\!\downarrow_{C_p\times C_p}
    $$
    is a nonzero $F[C_p\times C_p]$-module. By \cref{lem:CpCp-no-fpf}, there exists an element of order $p$ acting on $V$ with a fixed point.
\end{proof}

\noindent
We use \cref{cor:Cr-leq-n/2-obstruction} for elements of order less than or equal to five in the complex representations of $2.\A_n$, but it also handles all elements of order $p \leq n/2$ in the modular representations of $\A_n$.

\begin{lemma}
\label{lma: 2.A_n complex spin reps}
    Let $n \geq 10$ and $p$ be an odd prime such that $5 < p \le n$. Then every irreducible complex spin representation of $2.\A_n$ contains an element of order $p$ that acts with fixed points.
\end{lemma}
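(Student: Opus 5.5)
Two cases, according to $p\le n/2$ or $p>n/2$.

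\emph{Case $p \le n/2$.} By \cref{cor:Cr-leq-n/2-obstruction} with $F=\C$, every irreducible complex representation of $2.\A_n$, spin or not, already has an element of order $p$ with a fixed point. So the work is in the range $n/2 < p \le n$, where a Sylow $p$-subgroup of $\A_n$ is cyclic of order $p$. Its preimage in $2.\A_n$ is $C_{2p}$, which contains a unique subgroup $C_p$ mapping isomorphically onto the original one. An element $g$ of order $p$ in $2.\A_n$ is therefore a lift of a single $p$-cycle.

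\emph{Case $p > n/2$: reduce to a character inequality.} The element $g$ acts without fixed points in a representation with character $\chi$ exactly when the multiplicity of the trivial character of $\langle g\rangle$ vanishes, that is,
\[
\frac{1}{p}\sum_{k=0}^{p-1}\chi(g^k)=0 .
\]
Since $\chi(g^k)=\chi(g)$ or its Galois conjugate, this amounts to showing $\chi(1)+\sum_{k=1}^{p-1}\chi(g^k)\neq 0$.

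The tool is the Schur/Morris description of spin character values. A spin character of $\Sn{n}$ or $2.\A_n$, labelled by a strict partition $\lambda$, vanishes on lifts of elements whose cycle type is not entirely odd. Here $g$ has cycle type $(p,1^{n-p})$, which is all odd, so its values are given by a Murnaghan--Nakayama type rule for removing a $p$-bar from $\lambda$. Since $p>n/2$, there is at most one $p$-bar, and $\chi(g)$ is either $0$ or $\pm 2^{a}\,\chi_{\mu}(1)$, where $\mu$ is the residual strict partition of $n-p$. The constant $2^{a}$ is a power of $\sqrt2$ coming from parity changes; associate pairs need care.
\begin{itemize}
\item If $\chi(g)=0$, the multiplicity is $\chi(1)/p>0$, so $g$ has a fixed point.
\item Otherwise the multiplicity is $\bigl(\chi(1)+(p-1)\chi(g)\bigr)/p$, up to the Galois conjugation mentioned above, and we need $\chi(1)\neq -(p-1)\chi(g)$.
\end{itemize}
Because $n/2<p\le n$ forces $n-p<p$, the natural target is the strict inequality $\chi(1)>(p-1)|\chi(g)|$. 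I would prove it with the spin hook/bar-length formula for $\chi(1)$, comparing the degree of $\lambda$ with the degree of $\mu$. The ratio $\chi_\lambda(1)/\chi_\mu(1)$ is at least roughly $\binom{n}{p}2^{-O(1)}$, which exceeds $p$ once $n\ge 10$ and $p\ge 7$.

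\emph{Main obstacle.} The exceptional cases are where the degree bound is tight: $p=n$ or $p=n-1$ with $\lambda$ the basic spin partition $(n)$ or a near-hook like $(n-1,1)$. There the spin degrees are small, about $2^{\lfloor (n-1)/2\rfloor}$. I would handle these by direct computation. For the basic spin representation, the character values on odd-cycle lifts are explicit: $\pm 2^{(\ell(\text{cycle type})-1)/2}$ up to the associate splitting. Checking $\chi(1)+(p-1)\chi(g)\neq0$ then reduces to a comparison of powers of $2$ with $p-1$, e.g.\ $2^{(n-1)/2}\ne (p-1)\cdot 2^{(n-p)/2}$, using the standard normalisation in which the basic spin character takes the value $\pm 2^{(\ell-1)/2}$ on the lift of a permutation with $\ell$ odd cycles. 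This holds since $p-1$ is not a power of $2$ for $p\ge 7$ except for Fermat primes such as $17$ and $257$. Those Fermat-prime cases need an exact parity and sign check, and the hypothesis $p>5$ is what excludes the small primes where equality genuinely occurs. This exceptional-case bookkeeping, especially for associate characters splitting on $2.\A_n$ and for Fermat primes, is the step I expect to be hardest.
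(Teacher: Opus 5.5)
Your reduction is sound. For $p\le n/2$, \cref{cor:Cr-leq-n/2-obstruction} applies. For $n/2<p\le n$, every element of order $p$ is the order-$p$ lift $g$ of a $p$-cycle, and at most one $p$-bar can be removed from a strict partition $\lambda$. It then suffices to prove $\chi(1)>(p-1)|\chi(g)|$, summing over the Galois orbit on the two split classes.

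The gap is your argument for this inequality. The claim that $\chi_\lambda(1)/\chi_\mu(1)$ is at least roughly $\binom{n}{p}2^{-O(1)}$ is false on whole families. For the basic spin character $\lambda=(n)$ one has $\chi(1)/|\chi(g)|=2^{(p-1)/2}$ for every $p<n$; at $n=13$, $p=7$ that is $8$ against $\binom{13}{7}=1716$. For $\lambda=(n-1,1)$ and $p\le n-3$ the ratio is $2^{(p-1)/2}(n-2)/(n-p-2)$. So the binomial estimate cannot be what drives the inequality, and your exceptional list ($p\in\{n,n-1\}$ with $\lambda$ basic or a near-hook) does not isolate where the bound is tight. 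The Fermat-prime discussion is a red herring: in the basic case the comparison is $2^{(p-1)/2}$ against $p-1$, which is settled by size for all $p\ge 7$. Equality holds exactly at $p=3,5$.

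What actually makes the inequality true is the power of $2$ in spin degrees. Write $t_\lambda$ for the number of standard shifted tableaux of shape $\lambda$. The $2.\A_n$-degree is $2^{\lceil (n-\ell(\lambda))/2\rceil-1}t_\lambda$, and Morris's bar rule gives, on a non-split class,
\[
\frac{\chi(1)}{|\chi(g)|}=2^{\frac{p-1}{2}-d}\,\frac{t_\lambda}{t_\mu}.
\]
Here $\mu$ is $\lambda$ minus its $p$-bar, and $d=1$ if that bar consists of two parts of $\lambda$ summing to $p$, $d=0$ otherwise. Since $\mu\subseteq\lambda$ as shifted diagrams, $t_\mu\le t_\lambda$, so the ratio is at least $2^{(p-3)/2}>p-1$ for $p\ge 11$. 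For $p=7$ (which forces $n\le 13$), a single bar gives $8>6$, and a double bar needs $t_\lambda>\tfrac32 t_\mu$, which is a finite check.

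On the split classes ($\lambda=(n)$ with $p=n$, and $\lambda=(n-1,1)$ with $p=n-1$), the orbit sum is $\tfrac{p-1}{2}\bigl(\chi_+(g)+\chi_-(g)\bigr)$ with $|\chi_+(g)+\chi_-(g)|=1$. This is far below $\chi_\pm(1)$. With these repairs your route becomes a self-contained proof from classical Schur--Morris theory.

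The paper avoids character values altogether. It cites the Prasad--Staroletov--Velmurugan theorem: for a lift $g$ of a $p$-cycle with $p>5$, the minimal polynomial of $\rho(g)$ is $x^p\pm 1$, so every $p$-th root of $\pm1$ is an eigenvalue. It then takes whichever of $g$, $g^2$ has order $p$ to get the eigenvalue $1$. That route needs no case split at $n/2$, at the cost of relying on a deep recent result.
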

\begin{proof}
    Let $\rho$ be an arbitrary spin representation of $2.\A_n$. It is sufficient to show that $1$ is an eigenvalue of some element of order $p$ in $2.\A_n$. Let $g'\in \A_n$ be a $p$-cycle. Taking a preimage of $g'$ in $2.\A_n$ via the canonical homomorphism, we obtain an element $g$. Since the cycle type of $g'$ does not contain a 3 or a 5, we avoid the four families of exceptions in \cite[Theorem 1.2]{prasad2025minimal}. Hence, the minimal polynomial of $\rho(g)$ is
    $$x^p\pm 1.$$
    
    \medskip
    First, we consider the case where
    $$p \equiv 1 \text{ or } 7 \pmod 8.$$
    By \cite[Corollary 4.3]{prasad2025minimal}, we have
    $$\rho(g)^p=1.$$
    Consequently, the order of $g$ must be $p$ and its minimal polynomial divides $x^p-1.$ Hence, the minimal polynomial from \cite[Theorem 1.2]{prasad2025minimal} is exactly
    $$x^p-1,$$
    of which $1$ is a root.
    
    \medskip
    Then, in the case
    $$p \equiv 3 \text{ or } 5 \pmod 8,$$
    by \cite[Corollary 4.3]{prasad2025minimal}, we have $\rho(g)^p=-1.$ It follows that $g$ has order $2p$, and consequently $g^2$ is an element of order $p$ with $\rho(g^2)^p=1$. $g^2$ then again has minimal polynomial
    $$x^p-1,$$
    so again we have $1$ as a root.
\end{proof}

\noindent
We now have sufficient tools to prove the following theorem.

\begin{theorem}
\label{thm: 2.A_n complex rep}
    Let $n \geq 10$ and prime p with $p\leq n$. There exists an irreducible complex representation of $2.\A_n$ such that every element of order $p$ acts without fixed points if and only if $n=p$.
\end{theorem}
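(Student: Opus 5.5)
The plan is to split the irreducible complex representations of $2.\A_n$ into those that are trivial on the center $Z=\langle z\rangle\cong C_2$ and the faithful (spin) ones. We then treat the two directions of the equivalence separately, leaning on \cref{thm: A_n complex rep} for the first family and on \cref{cor:Cr-leq-n/2-obstruction} and \cref{lma: 2.A_n complex spin reps} for the second.

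For the reverse direction, suppose $n=p$. By \cref{thm: A_n complex rep} there is an irreducible complex representation $\rho$ of $\A_n$ in which every element of order $p$ acts fixed-point-freely. Inflate $\rho$ to $2.\A_n$ via the canonical surjection $2.\A_n\to\A_n$. Since $p$ is odd ($p=n\ge 10$), any element $g\in 2.\A_n$ of order $p$ maps to an element of order $p$ in $\A_n$, because $g\notin Z$. The inflated representation acts through that image, so $g$ acts without fixed points, and the inflation is still irreducible.

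For the forward direction, suppose $p<n$ and let $\rho$ be any irreducible complex representation of $2.\A_n$. We must produce an element of order $p$ with a nontrivial fixed point.
\begin{enumerate}
\item If $z\in\ker\rho$, then $\rho$ is inflated from an irreducible representation $\bar\rho$ of $\A_n$. By \cref{thm: A_n complex rep} (using $n\ge 10>8$ and $n\neq p$), some element $\bar g\in\A_n$ of order $p$ has a fixed point under $\bar\rho$.
  \begin{itemize}
  \item If $p$ is odd, lift $\bar g$ to a preimage $g$ and, if necessary, replace $g$ by $g^{p+1}$ or $zg$ to get a lift of order exactly $p$; it has the same fixed vectors.
  \item If $p=2$, the lift of an involution of $\A_n$ may have order $4$. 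In that case use \cref{cor:Cr-leq-n/2-obstruction} directly instead, since $2\le n/2$ applies to $2.\A_n$.
  \end{itemize}
\item If $\rho$ is a spin representation and $p\le n/2$ (which covers $p\in\{2,3,5\}$ because $n\ge10$), \cref{cor:Cr-leq-n/2-obstruction} applies to $2.\A_n$ over $\C$.
\item If $\rho$ is a spin representation and $p>5$, then \cref{lma: 2.A_n complex spin reps} gives the fixed point outright.
\end{enumerate}
Since $n\ge 10$ forces $5\le n/2$, the cases $p\le 5$ and $p>5$ exhaust all primes $p\le n$.

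The main obstacle is the bookkeeping between orders in $2.\A_n$ and in $\A_n$: lifts of elements of order $p$ can have order $2p$, and for $p=2$ the elements of order $2$ in $2.\A_n$ may be only $z$. For odd $p$ this is handled by taking the unique order-$p$ element in the coset $\{g,zg\}$. For $p=2$, the reverse direction never arises, since $n=2$ is excluded. In the forward direction, $z$ itself is an element of order $2$ that fixes everything when $z\in\ker\rho$, and \cref{cor:Cr-leq-n/2-obstruction} handles spin representations. So the $p=2$ case is immediate either way.
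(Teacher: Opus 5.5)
Your proposal is correct and follows the same route as the paper: representations trivial on the center are handled by inflating from $\A_n$ and invoking \cref{thm: A_n complex rep}, while spin representations are handled by \cref{cor:Cr-leq-n/2-obstruction} for $p\le 5$ and \cref{lma: 2.A_n complex spin reps} for $p>5$. Where the paper's one-line proof leaves it implicit, you also spell out the order bookkeeping between $2.\A_n$ and $\A_n$: you pass to the order-$p$ element of $\{g, zg\}$ for odd $p$, and for $p=2$ you use $z$ itself or the corollary.
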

\begin{proof}
    We find that there exists an element of $2.\A_n$ of order $p$ that has a fixed point in any irreducible complex spin representation of $2.\A_n$ for $p \le 5$ from \cref{cor:Cr-leq-n/2-obstruction}, and for $p>5$ from \cref{lma: 2.A_n complex spin reps}. Therefore, the only case where there exists an irreducible representation such that all elements of $2.\A_n$ of order $p$ act without fixed points for $n \geq 10$ is $n=p$, arising from \cref{thm: A_n complex rep}.
\end{proof}

\noindent The following example is computed exactly in James' book \cite{james1978} as a representation of $\Sn{n}$. Since the partition $\lambda = (n-1,1) \vdash n$ is not self-conjugate for $n > 3$, the restriction to $\A_n$ does not split by Clifford's Theorem \cite{Cliff}. Hence, the following two examples of $\F_q[\Sn{n}]$-modules that allow elements of $\Sn{n}$ of order $p$ to act without fixed points also translate directly to $\F_q[\A_n]$-modules that allow elements of $\A_n$ of order $p$ to act without fixed points. Since every representation of $\A_n$ is also a representation of $2.\A_n$, these in turn translate directly to $\F_q[2.\A_n]$-modules that allow elements of $2.\A_n$ of order $p$ to act without fixed points. To simplify our notation, we will define a group $H_n \in \{\Sn{n},\A_n,2.\A_n\}$ and consider the following examples as $\F_q[H_n]$-modules.

\begin{example}[Example~5.1 from \cite{james1978}]
\label{ex:augment-modular}
    Let $n > 3$, let $\F_q$ be a field of characteristic $q \leq n$, and let
    $$
    M=\F_q^n
    $$
    be the permutation module of $\Sn{n}$ with basis $e_1,\dots,e_n$. Define $W$, the augmentation submodule of the permutation module, as the kernel of the augmentation map:
    $$
    W:=\left\{
    (x_1,\dots,x_n)\in \F_q^n :
    \sum_{i=1}^n x_i=0
    \right\},
    $$
    and let $\mathbf{1}=(1, \dots, 1)$ with
    $$
    U=\langle \mathbf{1} \rangle.
    $$
    Then:
    \begin{enumerate}
        \item If $q\nmid n$, then $W$ is an irreducible $\F_q[H_n]$-module of dimension $n-1$.
        \item If $q\mid n$, then $U$ is the unique proper nonzero $\F_q[H_n]$-submodule of $W$, and $W/U$ is an irreducible $\F_q[H_n]$-module of dimension $n-2$.
    \end{enumerate}
\end{example}

\begin{corollary}
\label{cor:Ap}
    Let $p>3$ and $q$ be distinct primes. Let $W$ be the augmentation $\F_q[H_p]$-module, and let $g\in H_p$ be a $p$-cycle. Then $g$ acts without fixed points on $W$.
\end{corollary}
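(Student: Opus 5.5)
A direct computation suffices: I will work with the permutation basis of $M=\F_q^p$ and determine exactly which vectors are fixed by a $p$-cycle. The key point is that $p$ is a prime distinct from $q$, so $q\nmid p$. By part (1) of \cref{ex:augment-modular}, $W$ is then the irreducible augmentation module, and no quotient by $U$ is involved.

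Next I reduce to a concrete $p$-cycle acting on coordinates. When $H_p\in\{\Sn{p},\A_p\}$, the element $g$ is an honest $p$-cycle; since $p$ is odd, $p$-cycles lie in $\A_p$. After relabelling, $g$ acts by cyclically permuting $e_1\to e_2\to\cdots\to e_p\to e_1$.

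When $H_p=2.\A_p$, the module $W$ is inflated from $\A_p$ along the canonical map, as in the discussion preceding the example. So $g$ acts on $W$ through its image in $\A_p$. That image is a $p$-cycle, or at least a nontrivial element of odd order dividing the order of $g$. If $g$ has order $p$ (or $2p$, with image of order $p$), its image is an element of order $p$ in $\A_p$, hence a $p$-cycle, and the same relabelling applies.

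Now let $v=(x_1,\dots,x_p)\in W$ satisfy $gv=v$. Since $g$ permutes coordinates cyclically and transitively, invariance forces $x_1=x_2=\cdots=x_p$, so $v=c\mathbf{1}$ for some $c\in\F_q$. The defining condition of $W$ then gives $\sum x_i=pc=0$ in $\F_q$. Because $q\neq p$, $p$ is invertible in $\F_q$, so $c=0$ and $v=0$. Hence $C_W(g)=0$, which means $g$ acts without nontrivial fixed points.

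I expect the only mildly delicate step to be the $2.\A_p$ case: I must justify that the action factors through $\A_p$ and that the image of $g$ is a genuine $p$-cycle rather than something degenerate. This follows because $W$ is inflated and the kernel of $2.\A_p\to\A_p$ has order $2$, which is coprime to $p$. The hypothesis $p>3$ is used only to be consistent with the surrounding setup (for example, $\A_p$ nonabelian and \cref{ex:augment-modular} applicable with $n>3$); the fixed-point computation itself does not need it.
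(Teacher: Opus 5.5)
Your proposal is correct and follows essentially the same argument as the paper: a vector fixed by the $p$-cycle must be constant, $(c,\dots,c)$, and membership in $W$ forces $pc=0$, hence $c=0$ because $q\neq p$. Your extra remark that for $H_p=2.\A_p$ the module is inflated from $\A_p$, so $g$ acts through its image $p$-cycle, makes explicit a point the paper leaves implicit.
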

\begin{proof}
    Since $q \nmid p$, \cref{ex:augment-modular} shows that $W$ is an irreducible $\F_q[H_p]$-module of dimension $p-1$. The fixed points of $g$ on the permutation module $\F_q^p$ are exactly the vectors of the form
    $$
    (a,\dots,a).
    $$
    However, such a vector lies in $W$ only if
    $$
    pa=0.
    $$
    Since $q\neq p$, this implies $a=0$. Hence the fixed point space of $g$ in $W$ is spanned by the zero vector, so $g$ acts without nontrivial fixed points on $W$.
\end{proof}

\begin{corollary}
\label{cor:Ap+1_q|p}
    Let $p$ be an odd prime and let $q$ be a prime dividing $p+1$. Let
    $$Q=W/U,$$
    where $W$ is the augmentation $\F_q[H_{p+1}]$-module and $U=\langle \mathbf{1} \rangle$. Then:
    
    \begin{enumerate}
    \item $Q$ is an irreducible $\F_q[H_{p+1}]$-module of dimension
    $p-1$.
    \item Every element of order $p$ acts without fixed points on $Q$.
    \end{enumerate}
\end{corollary}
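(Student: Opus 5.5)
Part (1) follows from \cref{ex:augment-modular}(2) with $n=p+1$. Since $q\mid p+1$ and $p+1>3$, the quotient $Q=W/U$ is irreducible of dimension $(p+1)-2=p-1$. No further work is needed here beyond checking the hypotheses. Note that $q\neq p$, since $q\mid p+1$ and $p\ge 3$; in particular $q$ does not divide $p$.

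For part (2), I would first reduce to a single element. Every element of order $p$ in $H_{p+1}$ acts on $Q$ through its image in $\Sn{p+1}$ (or $\A_{p+1}$). For $H_{p+1}=2.\A_{p+1}$, the module is inflated from $\A_{p+1}$ and $p$ is odd, so an element of order $p$ maps to an element of order $p$. Since $2p>p+1$, an element of order $p$ in $\Sn{p+1}$ is a single $p$-cycle fixing one point. All such elements are conjugate in $\Sn{p+1}$, and fixed-point-freeness is a conjugation invariant. So it suffices to treat $g=(1\,2\,\cdots\,p)$, which fixes $e_{p+1}$.

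The main step is to compute the fixed space of $g$ on $Q$, and I would use coprimality for this. Since $\gcd(p,q)=1$, the action of $\langle g\rangle$ on the permutation module $M=\F_q^{p+1}$ is semisimple by Maschke. Therefore taking fixed points is exact on $0\to U\to W\to Q\to 0$, which gives
\[
\dim Q^{g}=\dim W^{g}-\dim U^{g}.
\]
On $M$, the fixed vectors of $g$ are $(a,\dots,a,b)$, so $\dim M^g=2$. Such a vector lies in $W$ exactly when $pa+b=0$. This gives a $1$-dimensional space spanned by $(1,\dots,1,-p)$. Because $q\mid p+1$, we have $-p\equiv 1 \pmod q$, so this vector equals $\mathbf 1$. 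Hence $W^g=U=U^g$, and $\dim Q^g=1-1=0$, so $g$ acts without nontrivial fixed points on $Q$.

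The only delicate point is the exactness of fixed points, since a naive lifting argument could miss fixed vectors in the quotient. Coprimality of $p$ and $q$ settles it. Alternatively, one can lift directly: if $v+U$ is fixed, then $gv-v=c\mathbf 1$. Summing the $g$-orbit gives $p c\mathbf 1=0$, so $c=0$, and then $v\in W^g=U$.
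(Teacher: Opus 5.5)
Your proof is correct and follows the paper's argument: \cref{ex:augment-modular}(2) gives irreducibility, and the fixed space of $g=(1\,2\,\cdots\,p)$ on $W$ is exactly $U$ because $-p\equiv 1 \pmod q$. You also spell out two steps the paper leaves implicit: the reduction from an arbitrary element of order $p$ (including in $2.\A_{p+1}$) to this single $p$-cycle, and the coprimality argument showing that $W^g=U$ forces $Q^g=0$.
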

\begin{proof}
    Since $q\mid(p+1)$, \cref{ex:augment-modular} shows that $Q=W/U$ is an irreducible module of dimension $p-1$. Let $g=(1\,2\,\cdots\,p)$. A vector
    $$
    v=(v_1,\ldots,v_{p+1})\in W
    $$
    is fixed by $g$ if and only if
    $$
    v_1=\cdots=v_p=a,\qquad v_{p+1}=b
    $$
    for some $a,b\in\F_q$. Since $v\in W$,
    $$
    pa+b=0.
    $$
    As $q\mid(p+1)$, $p\equiv-1\pmod q$, so $b=-pa=a$. Hence the fixed point space of $g$ in $W$ is exactly $U$, and since $p \not= q$, this implies that the fixed point space of $g$ in $Q$ is spanned by the zero vector, so $g$ acts without nontrivial fixed points on $Q$.
\end{proof}

\section{On Bridging and Internal Edges}
\label{Sec: Edges}

Recall that if $G$ is $T$-solvable, for some nonabelian simple group $T$, we say that $p-q$ is a bridging edge in $\GK{G}$ if and only if $p \in \pi(T)$ and $q\in \pi(G)\setminus \pi(T)$. We begin this section by deriving necessary conditions for the existence of bridging edges in the case that $T=\A_n$.

\begin{lemma}\label{bridging edges char}
    Let $n>10$, and let $G$ be a strictly $\A_n$-solvable group. If there exist primes $r\in\pi(\A_n)$ and $p\in\pi(G)\setminus\pi(\A_n)$ such that $p-r\in\GK{G}$, then $n$ is prime and $r=n$.
\end{lemma}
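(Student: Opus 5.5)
We first rule out small primes $r$ using the Frobenius criterion, and then handle the remaining primes through a minimal-section reduction to a module for an extension of $\A_n$.

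\emph{Small primes.} Suppose $p-r\in\GK{G}$ with $r\in\pi(\A_n)$ and $p\notin\pi(\A_n)$. Since $n>10$, we have $\pi(\Sn{n})=\pi(\A_n)$. Because $\Out(\A_n)$ is a $2$-group, this gives $\pi(\mathrm{Aut}(\A_n))=\pi(\A_n)$, so \cref{prop: frobenius criterion consequence} applies. For any prime $r\le n/2$, a Sylow $r$-subgroup of $\A_n$ contains $C_r\times C_r$ (two disjoint $r$-cycles, or two disjoint double transpositions when $r=2$). Such a group is not cyclic, dihedral or generalized quaternion, except for the Klein four-group when $r=2$. For $r=2$ and $n\ge 8$, however, $\A_n$ contains $C_2^3$, generated by three commuting double transpositions on eight points, and this is not of Frobenius type. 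Hence every $r\le n/2$ fails the criterion, and \cref{prop: frobenius criterion consequence} gives $r-p\notin\GK{G}$. So we may assume $r>n/2$ is an odd prime.

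\emph{Reduction to a module.} Take a section of $G$ that minimally witnesses the edge. We choose a chief series and find a chief factor $N/M\cong\A_n^k$ whose elements of order $r$ sit above a $p$-section. Arguing as in \cite{keller2024classifyingprimegraphsfinite}, we reduce to a group $H=V.K$, where $V$ is an elementary abelian $p$-group and $K$ is a group having $\A_n$ as a section. In $H$ no element of order $r$ centralizes an element of order $p$. Since $r>n/2$, a Sylow $r$-subgroup of $\A_n$ is cyclic of order $r$. After passing to a suitable subgroup and quotient, we can take $K$ to be a perfect central extension of $\A_n$ (that is, $\A_n$ or $2.\A_n$), possibly extended by outer automorphisms. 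Every element of order $r$ in this group must then act fixed-point-freely on $V$. Since $p\notin\pi(K)$, Maschke's theorem and the fact that $p\nmid|K|$ let us lift $V$ to a complex irreducible representation with the same fixed-point behaviour, by Brauer-character reduction. Within that representation, every element of order $r$ acts without fixed points.

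\emph{Conclusion and main obstacle.} Now apply \cref{thm: A_n complex rep} if $V$ is a faithful module for $\A_n$, and \cref{thm: 2.A_n complex rep} if the central involution acts nontrivially. In either case $n=r$, so $n$ is prime. If outer automorphisms are present, we restrict $V$ to the $\A_n$ or $2.\A_n$ part: a fixed-point-free element of order $r$ stays fixed-point-free on each irreducible constituent of the restriction. The main obstacle is the reduction step, namely justifying that we may pass to a module of $\A_n$ or $2.\A_n$, rather than of some larger section such as $\A_n\wr S$, while preserving fixed-point-freeness. For $\A_n^k$ with $k>1$, I would first try restricting to a single direct factor, taking an element of order $r$ lying in one coordinate. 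I would handle the remaining extensions by citing \cite[Theorem A]{alland2025realizinggruenbergkegelgraphstsolvable} where it applies, and otherwise by the standard Clifford-theoretic argument.
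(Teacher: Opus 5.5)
Your proof follows the paper's route. You kill small primes with the Frobenius criterion via \cref{prop: frobenius criterion consequence}; you cover all $r\le n/2$, while the paper only treats $r\in\{2,3,5\}$, which is harmless. For the remaining $r\neq n$ you reduce to a complex irreducible representation of $\A_n$ or $2.\A_n$ in which all elements of order $r$ act fixed-point-freely, and then invoke \cref{thm: A_n complex rep,thm: 2.A_n complex rep}. The reduction you flag as the main obstacle is exactly what the paper imports in one step from \cite[Corollary 2.2.6]{keller2024classifyingprimegraphsfinite}, so you can cite that result directly. There is no need to pass through \cite[Theorem A]{alland2025realizinggruenbergkegelgraphstsolvable}, a Clifford-theoretic argument, or a Brauer-character lifting step.
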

\begin{proof}
   Suppose first that $r\in\{2,3,5\}$. The Sylow $r$-subgroups of $\An{11}$ are, respectively, isomorphic to $C_2\wr C_2\wr C_2$, $C_3\wr C_3$, and $C_5\times C_5$ \cite{GAP}, which do not satisfy the Frobenius criterion. Since $\An{11}\leq \A_n$ for $n\geq 11$, the Sylow $r$-subgroups of $\A_n$ do not satisfy the Frobenius criterion either. Hence, \cref{prop: frobenius criterion consequence} implies that $p-r\notin\GK{G}$. Thus, $r>5$. If $r\neq n$, then \cref{thm: A_n complex rep,thm: 2.A_n complex rep}, together with \cite[Corollary 2.2.6] {keller2024classifyingprimegraphsfinite}, implies that $p-r\notin\GK{G}.$ Therefore, $r=n$.
\end{proof}

We now turn to the study of internal edges in $T$-solvable groups, i.e., edges incident only to primes dividing the order of $T$. A cornerstone of this approach is \cite[Corollary 2.0.2]{alland2025realizinggruenbergkegelgraphstsolvable}, which shows that the edges lost through actions on elementary abelian groups can be described in terms of Brauer characters. The following lemma extends this idea by considering actions on modules of mixed characteristic. This allows us to realize the deletion of a broader class of edges from the prime graph complement of $T$.

\begin{lemma}
\label{A13 backwards direction}
    Let $G$ be a finite group, let $R\subseteq\pi(G)$, and for each $r\in R$, let $V_r$ be a nontrivial elementary abelian $r$-group on which $G$ acts. Set $B:=\prod_{r\in R}V_r$. Then, for each $r\in R$, there exists a nonempty subset $Y_r\subseteq\IBr_r(G)$ such that, upon defining
    $$
    A_r:=\bigcup_{\chi\in Y_r}\left\{r-q\in\GK{G}\mid \exists g\in G,\ |g|=q,\ 
    \frac{1}{|g|}\sum_{x\in\langle g\rangle}\chi(x)>0\right\},
    $$
    we have
    \begin{equation*}
    \GK{B\rtimes G}\cong \GK{G}\setminus\left(\bigcup_{r\in R}A_r\cup\{r-s\mid r,s\in R,\ r\neq s\}\right).
    \end{equation*}
\end{lemma}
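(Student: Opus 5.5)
We compute $\GK{B\rtimes G}$ directly by analysing elements of order $pq$ in $B\rtimes G$ for each pair of distinct primes $p,q\in\pi(G)$; note $\pi(B\rtimes G)=\pi(G)$ since $R\subseteq\pi(G)$. The subgraph relation $\GK{B\rtimes G}\subseteq\GK{G}$ is immediate because $G$ is a quotient of $B\rtimes G$. So the task is to decide exactly which edges of $\GK{G}$ survive, and to choose the sets $Y_r$ accordingly.

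First, for distinct $r,s\in R$, the subgroup $V_r\times V_s\le B$ contains an element of order $rs$. Hence $r-s\notin\GK{B\rtimes G}$, which accounts for the second deleted family.

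Next fix $r\in R$ and $q\in\pi(G)\setminus R$ with $r-q\in\GK{G}$. An element of order $rq$ in $B\rtimes G$ maps to an element of $G$ of order $q$ or $rq$. The latter is impossible since $r-q\in\GK{G}$, so its image has order $q$. By Sylow/Schur--Zassenhaus-type conjugation inside $B\langle g\rangle$ (where $B$ is normal, and we may replace $B$ by its $r$-part $V_r$ after projecting, since the $r$-part of the element lies in $V_r$), such an element exists if and only if some $g\in G$ of order $q$ centralizes a nontrivial element of $V_r$.

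Now decompose $V_r$, or rather its composition factors $W_1,\dots,W_k$ as $\F_r[G]$-modules. Since $\gcd(q,r)=1$, $\langle g\rangle$ acts completely reducibly, and $C_{V_r}(g)\neq 0$ if and only if $C_{W_i}(g)\ne0$ for some $i$. To see this, pass to a chain of submodules; fixed points of coprime actions lift through quotients because $|C_{V}(g)|$ is multiplicative along the series. After extending scalars to a splitting field, $\dim C_{W_i}(g)$ equals the multiplicity of the trivial character, which is $\frac{1}{|g|}\sum_{x\in\langle g\rangle}\chi(x)$ for the Brauer characters $\chi$ of the absolutely irreducible constituents. This formula is valid because $\langle g\rangle$ is an $r'$-group, so Brauer characters restrict to ordinary characters there.

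With this in hand, define $Y_r$ to be the set of irreducible Brauer characters appearing as constituents of $V_r\otimes\overline{\F}_r$. If $V_r$ has $G$ acting trivially, we must still have $Y_r\neq\emptyset$; the trivial character then has positive average for every $g$, which matches the fact that every edge $r-q$ dies. Hence $Y_r$ is nonempty, and $r-q$ is deleted exactly when it lies in $A_r$. Edges between two primes outside $R$ are unaffected, since the elements involved are $B'$ and so any element of order $pq$ projects isomorphically to $G$.

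The main obstacle is the careful reduction from elements of order $rq$ in $B\rtimes G$ to fixed points of order-$q$ elements of $G$ on $V_r$. This requires handling conjugacy of $q$-complements in $V_r\langle \tilde g\rangle$, which follows from Schur--Zassenhaus, and the passage from $B$ to its $r$-component. A second issue is the bookkeeping that an edge $r-q$ with $q\in R$ is already covered by the mixed-characteristic deletion.
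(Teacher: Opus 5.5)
Your proof is correct, but it takes a different route from the paper, whose proof is entirely by citation. The paper uses \cite[Corollary 2.0.2]{alland2025realizinggruenbergkegelgraphstsolvable} to obtain, for each single $r\in R$, a nonempty $Y_r$ with $\GK{V_r\rtimes G}\cong\GK{G}\setminus A_r$. It then applies \cite[Lemma 3.1]{keller2024classificationprimegraphsoperatornamesz8} repeatedly to pass from the groups $V_r\rtimes G$ to $B\rtimes G$. That step intersects the individual graphs and deletes the edges between distinct primes of $R$.

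You instead compute $\GK{B\rtimes G}$ in one pass. Because $B$ is an abelian $R$-group, an element of order $rq$ with $q\notin R$ (and $r-q\in\GK{G}$) must project to an element of order $q$, and its $r$-part must lie in $V_r$. So the only question is whether some $g\in G$ of order $q$ satisfies $C_{V_r}(g)\neq 0$. Coprime action reduces this to the composition factors of $V_r$, and the Brauer-character average gives the fixed-point dimension on $\langle g\rangle$.

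Your route buys two things. It is self-contained, and it gives an explicit $Y_r$: the irreducible Brauer characters of the composition factors of $V_r\otimes\overline{\F}_r$. The cited corollary only asserts that such a set exists. Your approach also makes combining several primes immediate, with no induction over $R$. The paper's route is shorter and fits the framework it builds on.

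One small simplification: you do not need Schur--Zassenhaus. If the $q$-part of your element is $bg$ with $b\in B$ and $g\in G$, then $b$ centralizes $V_r$ because $B$ is abelian. Hence $bg$ and $g$ have the same fixed points on $V_r$.
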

\begin{proof}
    For each $r\in R$, \cite[Corollary 2.0.2]{alland2025realizinggruenbergkegelgraphstsolvable} gives a nonempty subset $Y_r\subseteq\IBr_r(G)$ such that
    $$
    \GK{V_r\rtimes G}\cong\GK{G}\setminus A_r.
    $$
    Applying \cite[Lemma 3.1]{keller2024classificationprimegraphsoperatornamesz8} repeatedly to the groups $V_r$ gives
    \begin{equation*}
    \GK{B\rtimes G}\cong
    \left(\bigcap_{r\in R}\GK{V_r\rtimes G}\right)
    \setminus
    \{r-s\mid r,s\in R,\ r\neq s\}.
    \end{equation*}
    Therefore,
    \begin{equation*}
    \GK{B\rtimes G}\cong \GK{G}\setminus\left(\bigcup_{r\in R}A_r\cup\{r-s\mid r,s\in R,\ r\neq s\}\right).
    \end{equation*}
\end{proof}

Next, we show that, once the edge sets $A_r$ are defined from the chosen Brauer character data, removing those edges from $\GK{T}$ can be realized by passing to a semidirect product $V\rtimes T$, where $V$ is a direct product of elementary abelian $r$-groups.

\begin{lemma}
\label{lma: Brauer realization theorem}
    Let $T$ be a nonabelian finite simple group, let $R\subseteq\pi(T)$. For each
    $r\in R$, let $\emptyset \neq Y_r\subseteq\IBr_r(T)$.
    For each $r\in R$ and $\chi\in Y_r$, define
    $A_r:=\bigcup_{\chi \in Y_r}\{r-q\in\GK{T}\mid \exists g\in T,\ |g|=q,\ 
    \frac{1}{|g|}\sum_{x\in\langle g\rangle}\chi(x)>0\}$.Then, there exists a group $V$ such that $G:=V\rtimes T$ is $T$-solvable and
    
    $$\GK{G}\cong\GK{T}\setminus\left(\bigcup_{r\in R}A_r\cup\{r-s\mid r,s\in R,\ r\neq s\}\right).$$
\end{lemma}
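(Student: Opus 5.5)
We will construct $V$ explicitly as a direct product of modules, one for each prime $r\in R$, and then apply \cref{A13 backwards direction} in its sharpened form. Fix $r\in R$. For each $\chi\in Y_r$, choose an irreducible $\overline{\F}_r[T]$-module affording $\chi$. Since $T$ is finite, this module is realized over some finite field $\F_{r^{k}}$. Viewing it as an $\F_r[T]$-module, we obtain a finite elementary abelian $r$-group $W_\chi$ on which $T$ acts. Because $T$ is simple and $\chi$ is nontrivial (if the trivial character lies in $Y_r$, we handle it separately), this action is faithful. Set
$$V_r:=\prod_{\chi\in Y_r}W_\chi \quad\text{and}\quad V:=\prod_{r\in R}V_r,$$
with $T$ acting diagonally. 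Then $G=V\rtimes T$ is $T$-solvable, since its only nonabelian composition factor is $T$ and $V$ is solvable.

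The key point is that we need the exact deletion sets $A_r$ built from the prescribed $Y_r$. \cref{A13 backwards direction} as stated only produces \emph{some} $Y_r$, so we will not quote it as a black box. Instead we repeat its proof while controlling the characters. For $V_r\rtimes T$, an edge $r-q$ of $\GK{T}$ is removed exactly when some element $g$ of order $q$ centralizes a nontrivial vector of $V_r$. Since $q\neq r$, the action of $\langle g\rangle$ on each $W_\chi$ is coprime, so the fixed space can be computed over $\overline{\F}_r$. The dimension of $C_{W_\chi}(g)$ (after extending scalars) is then a positive multiple of $\frac{1}{|g|}\sum_{x\in\langle g\rangle}\chi(x)$, the multiplicity of the trivial Brauer character in the restriction of $\chi$ to $\langle g\rangle$. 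Galois conjugates of $\chi$ also appear when $W_\chi$ is viewed over $\F_r$, but they are conjugate under field automorphisms and so have the same fixed-point dimension. Moreover, a vector of $V_r$ is fixed by $g$ if and only if each of its components is fixed. Together these give
$$\GK{V_r\rtimes T}=\GK{T}\setminus A_r.$$
No other edges change. Edges $s-q$ with $s,q\neq r$ are unaffected: an element of order $sq$ in $V_r\rtimes T$ maps onto an element of order $sq$ in $T$, since $V_r$ is an $r$-group. Also, $r$ gains no new neighbours, because $\pi(V_r\rtimes T)=\pi(T)$. This is exactly \cite[Corollary 2.0.2]{alland2025realizinggruenbergkegelgraphstsolvable} applied with the chosen characters, and I would cite it in the form that allows prescribing $Y_r$.

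Next we combine over the different primes exactly as in the proof of \cref{A13 backwards direction}, via \cite[Lemma 3.1]{keller2024classificationprimegraphsoperatornamesz8}. Any edge present in $\GK{V\rtimes T}$ must be present in each section $V_r\rtimes T$. Conversely, if $s-q$ survives in every $\GK{V_r\rtimes T}$ and is not an $R$–$R$ pair, then an element of order $sq$ in $V\rtimes T$ would project to $T$. Its $q$-part (when $q\notin R$) acts on $V_s$, and the analysis of the previous paragraph shows it has no fixed points there, so no such element exists. For distinct $r,s\in R$, the subgroup $V_r\times V_s$ is abelian, so it contains elements of order $rs$, and all such edges are removed. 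This yields the displayed formula.

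The main obstacle is the middle step: ensuring that the deleted edges are precisely $A_r$ for the \emph{given} $Y_r$. This requires two things. First, the fixed-point criterion must be an exact if-and-only-if with the character average; this uses coprimeness of $q$ and $r$, together with care about Galois conjugates when descending from $\overline{\F}_r$ to $\F_r$. Second, the trivial Brauer character must be handled. If it lies in $Y_r$, then $A_r$ contains every edge at $r$, and the direct factor on which $T$ acts trivially, taken as $C_r$, does indeed kill all such edges, so the formula remains consistent.
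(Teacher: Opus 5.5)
Your proof is correct and follows the same route as the paper. Both arguments build, for each $r\in R$, an elementary abelian $r$-module $V_r$ with $\GK{V_r\rtimes T}=\GK{T}\setminus A_r$, set $V=\bigoplus_{r\in R}V_r$ with componentwise action, and combine the pieces via \cite[Lemma 3.1]{keller2024classificationprimegraphsoperatornamesz8}, which also removes the $R$--$R$ edges. The only difference is that you verify the per-prime step directly, using that the coprime fixed-point dimension equals the trivial-character multiplicity in $\chi|_{\langle g\rangle}$, whereas the paper just invokes the construction in the proof of \cite[Theorem 3.7]{keller2024classificationprimegraphsoperatornamesz8}; you are also right not to quote \cref{A13 backwards direction}, since as stated it only produces some $Y_r$ rather than the prescribed ones.
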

\begin{proof} 
    By the construction in the proof of \cite[Theorem 3.7]{keller2024classificationprimegraphsoperatornamesz8}, for each $r\in R$, there exists a nontrivial elementary abelian $r$-group $V_r$ and a homomorphism $\varphi_r:T\to\operatorname{Aut}(V_r)$ such that $\GK{V_r\rtimes_{\varphi_r}T}\cong \GK{T}\setminus A_r$. Set $V:=\bigoplus_{r\in R}V_r$, and let $T$ act componentwise on $V$; that is, define $\varphi:T\to\operatorname{Aut}(V)$ by $\varphi = \bigoplus_{r\in R} \varphi_r$. Now, by applying \cite[Lemma 3.1]{keller2024classificationprimegraphsoperatornamesz8} inductively to the groups $V_r$, we obtain our desired result. Therefore, $\GK{G}\cong \left(\bigcap_{r\in R}\GK{V_r\rtimes_{\varphi_r}T}\right) \setminus\{r-s\mid  r,s\in R,\ r\neq s\}$. By the definition of each $V_r$, we have
    $$\bigcap_{r\in R}\GK{V_r\rtimes_{\varphi_r}T}=\GK{T}\setminus\bigcup_{r\in R}A_r.$$ 
    Consequently, 
    $$\GK{G}\cong\GK{T}\setminus\left(\bigcup_{r\in R}A_r\cup\{r-s\mid r,s\in R,\ r\neq s\}\right).$$
\end{proof}

\section{Characterization of $\A_n$-solvable Groups} \label{Sec: A11 and A13}

Using the tools developed so far, we now characterize the graphs realizable as prime graph complements of $\A_n$-solvable groups for $n\in \{11,13\}$.

\subsection{$\An{11}$-solvable Groups}

First, we use \cref{lma: A_n graph characterization} to illustrate the prime graph complement of $\An{11}$.

\begin{center}
    \begin{tikzpicture}[
        vertex/.style={circle,draw,fill=white,minimum size=8mm},
        every edge/.style={thick}
    ]
    
    \node[vertex] (11) at (0,2) {$11$};
    
    \node[vertex] (2) at (-4,0) {$2$};
    \node[vertex] (3) at (-1.4,0) {$3$};
    
    \node[vertex] (5) at (1.4,0) {$5$};
    \node[vertex] (7) at (4,0) {$7$};
    
    \draw (11)--(2);
    \draw (11)--(3);
    \draw (11)--(5);
    \draw (11)--(7);
    
    \draw (5)--(7);
    
    \end{tikzpicture}
\end{center}

In order to characterize the prime graph complements of $\An{11}$-solvable groups, we must understand the restrictions of bridging edges, possible triangles, and colorability in this setting. These are the same structural features that have driven related classifications, so they provide a natural starting point. In the following lemma, we focus on strictly $\An{11}$-solvable groups, since the solvable case was already characterized in \cite{GRUBER2015397}. 

\begin{lemma} 
\label{thm:fun stuff}
    Let $G$ be a strictly $\An{11}$-solvable group. Then $\overline \Gamma (G)$ satisfies all of the following:
    \begin{enumerate}
        \item There are no edges $r - p$ for $r\in \pi(\An{11})\setminus \{11\}$ and $p\in \pi(G) \setminus\pi(\An{11}).$
        \item The graph $\overline \Gamma (G)$ is 3-colorable and  has a coloring such that $N(\pi(\An{11})) \setminus\pi(\An{11})$ is of one color.
        \item If $\overline \Gamma (G)$ has a triangle, then its vertices are exactly $\{5,7,11\}.$
    \end{enumerate}
\end{lemma}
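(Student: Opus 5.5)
Part (1) follows directly from \cref{bridging edges char} with $n=11$. There $11$ is prime, so any bridging edge $p-r$ with $r\in\pi(\An{11})$ and $p\notin\pi(\An{11})$ forces $r=11$. Hence no vertex of $\{2,3,5,7\}$ is adjacent to an outside prime.

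For part (3) I would first locate all possible triangles. By \cref{lma: A_n graph characterization}, $\GK{\An{11}}$ has edges $11-2$, $11-3$, $11-5$, $11-7$ and $5-7$, so its only triangle is $\{5,7,11\}$. Since $G$ is strictly $\An{11}$-solvable, $\An{11}$ is a section of $G$, so $\GK{G}[\pi(\An{11})]\subseteq\GK{\An{11}}$.

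A triangle of $\GK{G}$ therefore has one of three shapes:
\begin{itemize}
\item[(a)] all three vertices in $\pi(\An{11})$, in which case it is $\{5,7,11\}$;
\item[(b)] two vertices in $\pi(\An{11})$ and one outside prime $p$;
\item[(c)] at most one vertex in $\pi(\An{11})$.
\end{itemize}
In case (b), part (1) forces both inside vertices to equal $11$, which is impossible.

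Case (c) involves at least two outside primes, and it is the step I expect to be the main obstacle. I would argue via the standard reduction used in the earlier $T$-solvable classifications. Let $N$ be the solvable radical, or more carefully take the structure $G/N$ with $N$ solvable and a normal series exhibiting the $\An{11}$ composition factors. An outside prime $p$ that is adjacent to another outside prime $q$ and also to some $r\in\pi(\An{11})$ would give a triangle inside a solvable section, or it would contradict the Frobenius-type arguments of \cite{keller2024classifyingprimegraphsfinite}. Concretely, I would cite the known result from that framework (e.g.\ \cite[Lemma 2.1.2]{keller2024classifyingprimegraphsfinite} or its analogue) that triangles in $\GK{G}$ must meet $\pi(T)$ in at least two vertices when $T$ is a composition factor. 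Failing that, I would give a direct argument: $G$ has a solvable Hall $\{p,q,s\}$-subgroup whenever $p,q,s$ are all outside $\pi(\Aut(\An{11}))=\pi(\An{11})$, and its prime graph complement is triangle-free by \cref{thm: solvable case}. If exactly one vertex is $11$, the two outside primes are both adjacent to $11$. The $11$-part together with the Frobenius action forced by $p-11$ and $q-11$ then yields an element of order $pq$ via the solvable part, again using the Gruber et al.\ lemma that $N(11)$ outside $\pi(T)$ lies in a solvable section.

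Part (2) would follow from the same structure. By parts (1) and (3), $N(\pi(\An{11}))\setminus\pi(\An{11})\subseteq N(11)$, and this set is independent, since an edge inside it would form a triangle containing $11$ other than $\{5,7,11\}$. The graph $\GK{G}\setminus\pi(\An{11})$ is the prime graph complement of a solvable section, so it is 3-colorable and triangle-free by \cref{thm: solvable case}. I would choose its 3-coloring as in \cite{GRUBER2015397}, where the coloring comes from Frobenius/2-Frobenius structure, so that all of $N(11)$ receives a single color, say color $1$. I would then color the inside vertices with $11$ in color $2$, $5$ in color $1$ and $7$ in color $3$; the vertices $2$ and $3$ are adjacent only to $11$, so they can take color $1$. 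It remains to check that $5$ is not adjacent to an outside vertex, which holds by part (1). The nontrivial point is arranging the monochromatic coloring of the outside neighbors, and I would handle it by quoting the corresponding coloring lemma of \cite{keller2024classifyingprimegraphsfinite} for $T$-solvable groups.
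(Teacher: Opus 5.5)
\textbf{Gap: the triangle $\{11,p,q\}$ with $p,q$ outside $\pi(\An{11})$.}
Part (1) is fine. So are the triangle cases with three, two, or zero vertices in $\pi(\An{11})$; the zero case works via a solvable Hall $\{p,q,s\}$-subgroup, which exists by $\pi$-separability. The gap is the case you flag yourself. By part (1), the only remaining shape is a triangle $\{11,p,q\}$ with $p,q\notin\pi(\An{11})$, and nothing you write excludes it.

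The sentence that the Frobenius actions behind $p-11$ and $q-11$ ``yield an element of order $pq$ via the solvable part'' is not an argument, and it aims at the wrong target. The edge $p-q$ is compatible with the solvable structure: the Hall $\{p,q\}$-subgroup is simply Frobenius or 2-Frobenius. So the contradiction has to be an element of order $11p$ or $11q$. Your citation does not supply this either. \cite[Lemma 2.1.2]{keller2024classifyingprimegraphsfinite} is the Frobenius-criterion result recorded here as \cref{prop: frobenius criterion consequence}; it rules out bridging edges at $2,3,5$, not triangles.

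\textbf{How the paper closes it.}
The paper uses \cite[Lemma 2.3.5]{keller2024classifyingprimegraphsfinite}, which applies here because, by part (1), only $11$ carries bridging edges. That lemma gives both conclusions at once:
\begin{itemize}
\item a 3-coloring in which the outside neighbours of $11$ are monochromatic;
\item every triangle of $\GK{G}$ lies in $\GK{G}[\pi(\An{11})]$.
\end{itemize}
Part (3) is then immediate, since $\{5,7,11\}$ is the only triangle of $\GK{\An{11}}$, and your case split becomes unnecessary. You already fall back on this lemma for part (2), and you do need it there: an independent set in a 3-colorable triangle-free graph need not be monochromatic in any 3-coloring, so your ``choose the coloring as in Gruber et al.'' step is not automatic. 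Use the same lemma for (3).

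\textbf{A direct argument, if you prefer one.}
Let $K$ be the solvable radical of $G$.
\begin{enumerate}
\item The edge $p-11$ forces $p\nmid|G/K|$. Otherwise an element of order $p$ in $G/K$ cyclically permutes some $p$ of the copies of $\An{11}$ in the socle of $G/K$, and so centralizes a diagonal element of order $11$. The same holds for $q$.
\item Hence a Hall $\{p,q\}$-subgroup $H$ of $G$ lies in $K$. It is Frobenius or 2-Frobenius, with a cyclic top factor.
\item By the Frattini argument, $N_G(H)$ maps onto $G/K$. So its perfect residual contains an element $x$ of order $11$.
\item This perfect residual must act trivially on the cyclic top factor of $H$, since the automorphism group of a cyclic group is abelian.
\item Coprime action then yields a nontrivial $p$- or $q$-element centralized by $x$. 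This contradicts $p-11$ or $q-11$.
\end{enumerate}
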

\begin{proof}
   First, observe that (1) follows directly from \cref{bridging edges char}. Also, from \cite[Lemma 2.3.5]{keller2024classifyingprimegraphsfinite}, $\overline \Gamma (G)$ is 3-colorable and has a 3-coloring where all neighbors of $11$ not in $\pi(\An{11})$ are the same color; hence (2) is satisfied. Furthermore, from \cite[Lemma 2.3.5]{keller2024classifyingprimegraphsfinite} it must be that every triangle in $\GK{G}$ is contained in $\overline \Gamma (G)[\pi(\An{11})].$ Since $\overline \Gamma (\An{11})$ contains exactly one triangle with vertices $\{5,7,11\},$ (3) is satisfied.
\end{proof}

We have now developed the tools needed to characterize the graphs realizable as prime graph complements of $\An{11}$-solvable groups. We next prove this characterization.
 
\begin{theorem}
\label{A_11 Characterization}
    A graph $\Xi$ is isomorphic to the prime graph complement $\GK{G}$ of some $\An{11}$-solvable group $G,$ if and only if $\Xi$ is 3-colorable and one of the following is satisfied: 
    \begin{enumerate}
        \item $\Xi$ is triangle-free. 
        \item $\Xi$ contains a subset of vertices $X=\{a,b,c,d,e\}$ where $\Xi\setminus X$ is triangle-free, $\Xi$ has a coloring such that $N(X)\setminus X$ is monochromatic, and the following are true:
        \begin{enumerate}
            \item There is exactly one triangle in $\Xi$ with vertices $\{a,b,c\}.$ 
            \item If $p-r\in \Xi$ with $p\in X$ and $r\in \Xi \setminus X$, then $p=a.$ 
            \item If $p-r\in \Xi$ with $r\in \{d,e\}$ then $p=a.$
        \end{enumerate}
    \end{enumerate}
\end{theorem}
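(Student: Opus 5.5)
The plan is to prove the two directions separately. The forward direction reads the conditions off \cref{thm:fun stuff}. The reverse direction builds a group from \cref{lma: Brauer realization theorem} (internal edges) and \cref{lma: psl2q realization} (bridging edges).

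\textbf{Necessity.} Let $G$ be $\An{11}$-solvable. If $G$ is solvable, \cref{thm: solvable case} shows that $\GK{G}$ is $3$-colorable and triangle-free, so condition (1) holds. Now suppose $G$ is strictly $\An{11}$-solvable, so that $\pi(\An{11})=\{2,3,5,7,11\}\subseteq\pi(G)$. By \cref{thm:fun stuff}(2), $\GK{G}$ is $3$-colorable. If $\GK{G}$ is triangle-free we are in case (1). Otherwise we set
\[
X=\pi(\An{11}),\quad a=11,\quad \{b,c\}=\{5,7\},\quad \{d,e\}=\{2,3\},
\]
and check each requirement.
\begin{itemize}
\item \cref{thm:fun stuff}(3) shows that every triangle has vertex set $\{5,7,11\}$. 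This gives (a), and it also shows that $\Xi\setminus X$ is triangle-free.
\item \cref{thm:fun stuff}(2) supplies a coloring in which $N(X)\setminus X$ is monochromatic.
\item \cref{thm:fun stuff}(1) shows that any bridging edge starts at $11$, which is (b).
\item For (c), an edge at $2$ or $3$ is either bridging, and then excluded by (1), or internal. Internal edges lie in $\GK{\An{11}}$, where by \cref{lma: A_n graph characterization} the vertices $2$ and $3$ are adjacent only to $11$.
\end{itemize}

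\textbf{Sufficiency.} If $\Xi$ is triangle-free and $3$-colorable, \cref{thm: solvable case} gives a solvable group, which is $\An{11}$-solvable. In case (2), conditions (a)--(c) force $\Xi[X]$ to have a specific shape. It contains the triangle $a$-$b$-$c$. Every other edge meeting $d$ or $e$ goes to $a$, and there is no edge $d-e$. Hence $\Xi[X]$ is $\GK{\An{11}}$ with some subset $S\subseteq\{11-2,\,11-3\}$ of edges deleted, under $a\mapsto 11$, $\{b,c\}\mapsto\{5,7\}$ and $\{d,e\}\mapsto\{2,3\}$. Since $2$ and $3$ play symmetric roles in $\GK{\An{11}}$, the labeling of $d$ and $e$ is immaterial.

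To realize $\Xi[X]$, take $R\subseteq\{2,3\}$ to be the primes whose edge to $11$ is to be deleted. For each $r\in R$, let $Y_r$ consist of the trivial Brauer character in characteristic $r$. Every element fixes points of the trivial character, so $A_r$ is exactly the set of edges at $r$ in $\GK{\An{11}}$, namely $\{r-11\}$. The extra deletion $\{2-3\}$ is not an edge of $\GK{\An{11}}$ anyway. So \cref{lma: Brauer realization theorem} produces $E=V\rtimes\An{11}$, strictly $\An{11}$-solvable, with $\GK{E}\cong\Xi[X]$. If $R=\emptyset$, take $E=\An{11}$.

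Next apply \cref{lma: psl2q realization}; the triangle-freeness and coloring hypotheses hold by assumption. For each $v\in\Xi\setminus X$ we need a complex irreducible representation of $E$ matching the adjacencies of $v$ to $X$. By (b), the only possible neighbor of $v$ in $X$ is $a$.
\begin{itemize}
\item If $a-v\notin\Xi$, use the trivial representation, in which every element has fixed points.
\item If $a-v\in\Xi$, take the representation from \cref{thm: A_n complex rep} with $n=p=11$, in which every element of order $11$ acts fixed-point-freely. Inflate it to $E$ through $E\to\An{11}$; elements of order $11$ in $E$ map to elements of order $11$, so they still act fixed-point-freely.
\end{itemize}
In the second case we must also confirm that elements of orders $2,3,5,7$ have fixed points. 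For $2,3,5\le 11/2$ this follows from \cref{cor:Cr-leq-n/2-obstruction}. For $7$ it follows from \cref{thm: A_n complex rep} applied with $p=7\neq 11$. The $2$- and $3$-elements of $V$ act trivially in an inflated representation, so they contribute fixed points as well.

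\textbf{Expected obstacle.} The delicate point is the interpretation of condition 2 in \cref{lma: psl2q realization}. For $x\in\{b,c,d,e\}$ it must mean that not all elements of order $\phi(x)$ act fixed-point-freely, which the results above guarantee. I would also check that the coloring requirement, that $N(X)\setminus X$ be monochromatic, is satisfied in exactly the form stated in that lemma.
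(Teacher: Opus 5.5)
Your proof is correct and follows the paper's argument. The forward direction is read off \cref{thm:fun stuff} with $a=11$, $\{b,c\}=\{5,7\}$, $\{d,e\}=\{2,3\}$, and the reverse direction realizes $\Xi[X]$ and then attaches bridging edges at $a$ via \cref{lma: psl2q realization}, using the representation from \cref{thm: A_n complex rep} inflated to $E$. The only differences are cosmetic: building $E$ from \cref{lma: Brauer realization theorem} with trivial Brauer characters just yields the central direct products ($C_2\times\An{11}$, $C_2\times C_3\times\An{11}$, etc.) that the paper writes down explicitly, and you additionally check that elements of orders $2,3,5,7$ keep fixed points in the inflated representation, which the paper leaves implicit.
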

\begin{proof}
    We will first prove the forward direction. Let $G$ be an $\An{11}$-solvable group. If $G$ is solvable, then $\GK{G}$ is 3-colorable and $(1)$ is satisfied \cite{keller2024classifyingprimegraphsfinite}. Otherwise, $G$ is strictly $\An{11}$-solvable. First, note that by \cref{thm:fun stuff}, the graph $\GK{G}$ is 3-colorable  such that $N(\pi(\An{11}))\setminus\pi(\An{11})$ is monochromatic. If $\GK{G}$ does not contain a triangle, then $\GK{G}$ satisfies (1). If $G$ does contain a triangle, then again by \cref{thm:fun stuff}, the vertices are exactly $\{11,7,5\}$ and there are no edges $r - p$ for $r\in \pi(\An{11})\setminus\{11\}$ and $p\in \pi(G)\setminus\pi(\An{11}).$ Any labeling consistent with $a=11,\{b,c\}=\{5,7\}, \{d,e\}=\{2,3\}$ satisfies 2(a) and 2(b). Also, the vertices $2$ and $3$ cannot have neighbors other than $11$ in $\GK{\An{11}}$, so 2(c) is satisfied. 

    We now turn to the backwards direction. Suppose we have some 3-colorable graph $\Xi$ that satisfies (1) or (2). If $\Xi$ satisfies $(1)$, then there exists a solvable group $G$ with prime graph complement isomorphic to $\Xi$ by \cite{GRUBER2015397}. Otherwise, $\Xi$ satisfies (2). Observe that we can always realize $\Xi[X]$ as the prime graph complement of some $\A_{11}$-solvable group $F$. We have three cases,
    \begin{enumerate}
        \item If both $d$ and $e$ are connected to $a,$ then choose $F=\An{11}$.
        \item If both $d$ and $e$ are isolated vertices, then choose $F=C_2\times C_3 \times \An{11}$.
        \item If either $d$ or $e$ is isolated, then choose $F=C_2\times \An{11}$.
        \end{enumerate}
    
    We now show that for each vertex $r \in X \setminus S$, there exists a complex representation $\rho$ of $F$ such that for all $p \in S$, $r-p \in \Xi$ if and only if all elements of order the prime associated to $p$ act without fixed points in $\rho$. In the case where $N(r) \cap S = \emptyset$, the trivial representation suffices. Consider then, the case where $r$ forms a bridging edge with some $p$. We know from 2(b) that if $p-r \in \Xi$, then $p=a$, which we associated to $11$. By \cref{thm: A_n complex rep}, there exists a representation $\tilde{\rho}$ of $\An{11}$ in which only elements of order $11$ act without fixed points.  Let $\pi: F \to \An{11}$ be the natural projection map. We can then define a representation of $F$, $\rho:=\tilde{\rho} \circ \pi$. Notice that $F = N \times \An{11}$, where $N$ is one of $C_2$, $C_3$, or $C_2\times C_3$. Since, by construction, $ \ker\rho = N$ and $11 \nmid |N|$, one can see that $|g|=11$ implies $|\rho(g)|=11$ for all $g \in F$. Consequently, $\rho$ is the desired representation of $F$. 

    Finally, we apply \cref{lma: psl2q realization} to extend the realization of $\Xi[X]$ to the entire graph $\Xi$ and conclude there exists an $\An{11}$-solvable group $G$ such that $\GK{G} \cong \Xi$.
   
\end{proof}

\subsection{$\An{13}$-solvable Groups}

In this section, we find a characterization for prime graphs of $\An{13}$-solvable groups. We begin by drawing the prime graph complement of $\An{13}$ below.
    
\par\medskip
    
{\centering
    \begin{tikzpicture}[
        scale=0.85,
        every node/.style={
            circle,
            draw,
            minimum size=8mm,
            inner sep=0pt,
            font=\large
        }
    ]
    \node (v2)  at (0,2.5)   {$2$};
    \node (v3)  at (0,0.5)   {$3$};
    \node (v5)  at (6,2.5)   {$5$};
    \node (v7)  at (6,0.5)   {$7$};
    
    \node[fill=red!40, line width=0.8pt, draw=white] (v11) at (3,2.5) {$11$};
    \node[fill=red!40, line width=0.8pt, draw=white] (v13) at (3,0.5) {$13$};

    \draw (v3) -- (v11);
    \draw (v7) -- (v11);
    \draw (v3) -- (v13);
    \draw (v7) -- (v13);
    
    \draw (v2) -- (v11);
    \draw (v5) -- (v11);
    \draw[white, line width=3.5pt] (v2) -- (v13);
    \draw[white, line width=3.5pt] (v5) -- (v13);
    \draw (v2) -- (v13);
    \draw (v5) -- (v13);
    
    \draw[red, line width=2pt] (v11) -- (v13);
    
    \end{tikzpicture}
    \par
}

Characterizing the internal edges in the prime graph complements of $\An{13}$-solvable groups is more difficult than in the $\An{11}$-solvable case, because more internal edges are possible. We use the results developed up to this point to obtain partial information about the internal edge structure in the $\An{13}$-solvable case.

\begin{lemma}
\label{lma: Brauer internal edges A_13}

    Let $H\in \{\An{13}, 2.\An{13}, \Sn{13}\}$, $r\in\pi(\An{13})$, and $p\in \{2,3,5,7\}$. Then
    \begin{enumerate}
        \item for every $q\in\{2,3,5,7,11\}$, there exists an element of order $q$ that has a fixed point in every representation corresponding to a character $\chi\in\IBr_r(H)$,
        \item if $\chi \in \IBr_r(H)$, and $p-13 \in A_{\chi}:=\left\{r-q\mid q\in\pi(\An{13}) \text{ s.t. }\exists g\in H,\ |g|=q,\ \frac{1}{|g|}\sum_{x\in\langle g\rangle}\chi(x)>0\right\}$, then $p-11\in A_{\chi}$ or $11-13\in A_{\chi}$, 
        \item there is $\chi_p\in \IBr_p(H)$ such that $A_{\chi_p}\cap \GK{\An{13}} = \{p-11\}$.
    \end{enumerate}
    
\end{lemma}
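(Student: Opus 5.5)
We would prove part (3) first by an explicit module, then part (1), and obtain part (2) as a short consequence of part (1).

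\textbf{Part (3).} Fix $p\in\{2,3,5,7\}$ and let $W$ be the augmentation $\F_p[H_{13}]$-module of \cref{ex:augment-modular}. Since $p\nmid 13$, $W$ is irreducible of dimension $12$. For $H=2.\An{13}$ we inflate $W$ along $2.\An{13}\to\An{13}$; this does not change which elements of order $q$ act with fixed points. Let $\chi_p$ be the Brauer character of $W$.
\begin{itemize}
\item By \cref{cor:Ap} with $p=13$ and characteristic $q=p$, a $13$-cycle acts on $W$ without fixed points, so $p-13\notin A_{\chi_p}$.
\item An $11$-cycle $g$ fixes exactly the vectors $(a,\dots,a,b,c)$ of $\F_p^{13}$. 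The condition $11a+b+c=0$ leaves a $2$-dimensional fixed space in $W$, which is nonzero. So $p-11\in A_{\chi_p}$.
\item By \cref{lma: A_n graph characterization}, the only edges of $\GK{\An{13}}$ at $p$ are $p-11$ and $p-13$.
\end{itemize}
Hence $A_{\chi_p}\cap\GK{\An{13}}=\{p-11\}$.

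\textbf{Part (1).} Fix $r$, $\chi\in\IBr_r(H)$, and $q\in\{2,3,5,7,11\}$.
\begin{itemize}
\item If $q=r$, a nontrivial $q$-group acting on a nonzero module in characteristic $q$ always has a nonzero fixed vector.
\item If $q\le 13/2$, i.e.\ $q\in\{2,3,5\}$, and $r\neq q$, \cref{cor:Cr-leq-n/2-obstruction} applies directly.
\item This leaves $q\in\{7,11\}$ with $r\neq q$. Here $\langle g\rangle$ acts semisimply, so the fixed-space dimension is
$$
\frac{1}{q}\Bigl(\chi(1)+\sum_{1\neq x\in\langle g\rangle}\chi(x)\Bigr).
$$
Since $7$- and $11$-cycles are rational in $\Sn{13}$, fixed-point-freeness forces $\chi(g)=-\chi(1)/(q-1)$. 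For $H=2.\An{13}$ we pass to an element of odd order $q$ in the preimage, exactly as in the proof of \cref{lma: 2.A_n complex spin reps}.
\end{itemize}

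\textbf{Part (2).} Suppose $p-13\in A_\chi$. Then the characteristic satisfies $r\in\{p,13\}$.
\begin{itemize}
\item If $r=p$, part (1) with $q=11$ gives an element of order $11$ with a fixed point, so $p-11\in A_\chi$.
\item If $r=13$, the same element gives $13-11\in A_\chi$.
\end{itemize}
So part (2) needs no new input beyond part (1).

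\textbf{Main obstacle.} The hard step is the case $q\in\{7,11\}$ with $r\notin\{q\}$ in part (1). Restricting to $\An{12}$ (or to $\An{7}\times\An{6}$) cannot settle it in general. Indeed, \cref{cor:Ap+1_q|p} shows that in characteristics $r\mid 12$ an $11$-element can act fixed-point-freely on an $\An{12}$-module. So no purely structural reduction to a smaller alternating group works in every characteristic.

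Our first attempt would be:
\begin{itemize}
\item Use the $q$-blocks of defect one: the Sylow $7$- and $11$-subgroups of $\An{13}$ and $2.\An{13}$ have order $7$ and $11$. This strongly restricts the values $\chi(g)$ on $q$-elements, e.g.\ via Green correspondence with the normalizer of a Sylow $q$-subgroup.
\item Show that $|\chi(g)|$ is too small to satisfy $\chi(g)=-\chi(1)/(q-1)$ once $\chi(1)>1$. The trivial character is harmless, since it has fixed points.
\end{itemize}
Failing a clean theoretical bound, we would verify the condition $\chi(g)\neq-\chi(1)/(q-1)$ directly from the known $r$-modular Brauer character tables of $\An{13}$, $2.\An{13}$ and $\Sn{13}$ for $r\in\{2,3,5,7,11,13\}$.
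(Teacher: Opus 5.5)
Your proposal is correct and is essentially the paper's proof. Part (1) uses \cref{cor:Cr-leq-n/2-obstruction} for $q\in\{2,3,5\}$ and a direct check of the $r$-modular Brauer character tables of $H$ for $q\in\{7,11\}$; the paper does this check in GAP, so your computational fallback, not the defect-one block idea, is what is actually used. Part (2) follows immediately from (1), and part (3) comes from the augmentation module of \cref{cor:Ap}, where your separate handling of $q=r$ and your direct computation of the $2$-dimensional fixed space of an $11$-cycle on $W$ only make explicit what the paper leaves implicit or obtains from part (1).
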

\begin{proof}

    First, we prove (1). If $q\in\{2,3,5\}$, then \cref{cor:Cr-leq-n/2-obstruction} gives the desired result. We now consider elements of order $7$ and $11$. Using GAP \cite{GAP}, we found that the dimension of the fixed point space, given by $\frac{1}{|g|}\sum_{x\in\langle g\rangle}\chi(x)$, is greater than zero for every $g\in \An{13}$ with $|g|\in\{7,11\}$ and every $\chi\in\IBr_r(H)$. 
    
    \medskip
    Consequently, if $p\in\{2,3,5,7\}$ and $\chi\in \IBr_p(H)$, then $11-p\in A_{\chi}$. The above results also guarantee that $11-13\in A_{\chi}$ for all $\chi \in \IBr_{13}(H)$. Moreover, if $p-13 \in A_{\chi}$ for some $\chi \in \IBr_{r}(H)$, then $r=p$ or $r=13$. Since $\chi\in \IBr_p(H)$ implies $p-11\in A_{\chi}$ and $\chi \in \IBr_{13}(H)$ implies $11-13\in A_{\chi}$, we obtain the desired result for (2). 
    
    \medskip
    Now, we show (3). \cref{cor:Ap} proves the existence of $\chi_p \in \IBr_p(H)$ such that all elements of order $13$ act fixed-point-freely, i.e. $p-13 \not\in A_{\chi_p}$. Hence,  $A_{\chi_p}\cap \GK{\An{13}} = \{p-11\}$, since $N_{\GK{\An{13}}}(p)=\{11,13\}$.
    
\end{proof}

\begin{lemma}
\label{lma: K-does-not-affect-H-internal-edges}
    Let $H$ and $K$ be finite groups, and let $B$ be a direct product of elementary abelian groups on which $H\times K$ acts. Suppose that $\pi(H)\cap\pi(K)=\emptyset$. Let $B_{\pi(H)}$ be the direct product of the elementary abelian factors of $B$ whose characteristic belongs to $\pi(H)$. Then
    \begin{equation*}
        \GK{B\rtimes(H\times K)}[\pi(H)] = \GK{B_{\pi(H)}\rtimes H}.
    \end{equation*}
\end{lemma}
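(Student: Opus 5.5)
The plan is to compare, for each pair of distinct primes $p,q\in\pi(H)$, the existence of an element of order $pq$ in $G:=B\rtimes(H\times K)$ with the existence of such an element in $G_0:=B_{\pi(H)}\rtimes H$. Since $p-q$ fails to be an edge in a prime graph complement exactly when such an element exists, this comparison gives the claimed equality.

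\textbf{Vertex sets.} Every factor of $B_{\pi(H)}$ has characteristic in $\pi(H)$, so $\pi(G_0)=\pi(H)$. Moreover $\pi(H)\subseteq\pi(G)$. Hence both sides of the claimed equality are graphs on the vertex set $\pi(H)$.

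\textbf{Setting up the two subgroups.} $B$ is a direct product of elementary abelian groups, so it is abelian. Let $B_{\pi(H)'}$ denote the product of the factors whose characteristic lies outside $\pi(H)$. Then $B=B_{\pi(H)}\times B_{\pi(H)'}$, and $B_{\pi(H)}$ and $B_{\pi(H)'}$ are the Hall $\pi(H)$- and $\pi(H)'$-subgroups of $B$. They are therefore characteristic in $B$, hence normal in $G$. In particular $H$ acts on $B_{\pi(H)}$ by restriction, and $G_0=B_{\pi(H)}H$ is a subgroup of $G$. This immediately gives one inclusion: if $G_0$ has an element of order $pq$, so does $G$. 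So every edge $p-q$ of $\GK{G}[\pi(H)]$ is an edge of $\GK{G_0}$.

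\textbf{Reverse inclusion.} Suppose $g\in G$ has order $pq$ with $p,q\in\pi(H)$.
\begin{enumerate}
\item Pass to $\bar G:=G/B_{\pi(H)'}\cong B_{\pi(H)}\rtimes(H\times K)$. The kernel is a $\pi(H)'$-group and $g$ is a $\pi(H)$-element, so $\langle g\rangle\cap B_{\pi(H)'}=1$. Hence the image $\bar g$ still has order $pq$.
\item Inside $\bar G$, the subgroup $B_{\pi(H)}H$ is the full preimage of $H\trianglelefteq H\times K$ under $\bar G\to \bar G/B_{\pi(H)}\cong H\times K$. It is therefore normal of index $|K|$.
\item Because $\pi(K)\cap\pi(H)=\emptyset$, the image of the $\pi(H)$-element $\bar g$ in the quotient $\bar G/B_{\pi(H)}H\cong K$ is trivial. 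So $\bar g\in B_{\pi(H)}H\cong G_0$.
\end{enumerate}
Thus $G_0$ contains an element of order $pq$, and every edge of $\GK{G_0}$ is an edge of $\GK{G}[\pi(H)]$.

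\textbf{Main obstacle.} The only delicate point is making sure the relevant pieces are genuinely normal and that orders are preserved. This means checking that $B_{\pi(H)}$ and $B_{\pi(H)'}$ are $H\times K$-invariant, which follows because they are characteristic Hall subgroups of the abelian group $B$. It also means checking that the element $g$ survives, with the same order, in the coprime quotient and then lies inside the normal subgroup $B_{\pi(H)}H$; this is exactly where the hypothesis $\pi(H)\cap\pi(K)=\emptyset$ is used.
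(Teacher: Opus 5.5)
Your proof is correct and is exactly the routine verification that the paper compresses into the single word ``Immediate.'' The pieces are: normality of $B_{\pi(H)}$ and $B_{\pi(H)'}$ as characteristic Hall subgroups of the abelian group $B$; the subgroup $B_{\pi(H)}H\cong B_{\pi(H)}\rtimes H$ for one inclusion; and, for the other, passing to the coprime quotient $G/B_{\pi(H)'}$ and noting that a $\pi(H)$-element must lie in the normal subgroup $B_{\pi(H)}H$, whose quotient is isomorphic to $K$. You also rightly get invariance from $B_{\pi(H)}$ being characteristic rather than from assuming each elementary abelian factor is $H\times K$-invariant, which the statement's phrasing leaves unaddressed.
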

\begin{proof}
    Immediate.
\end{proof}

\begin{lemma}
\label{A13 edge implication}
    Let $G$ be an $\An{13}$-solvable group such that $\GK{G}[\pi(\An{13})]$ contains a triangle. If $p\in\{2,3,5,7\}$ and $p-11\in\GK{G}$, then $p-13\in\GK{G}$.
\end{lemma}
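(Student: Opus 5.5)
The triangle hypothesis pins down the structure of $\GK{G}[\pi(\An{13})]$, and then the only way to lose $p-13$ is through a Brauer character that also kills $p-11$. First, by the remark preceding \cref{lma: A_n graph characterization}, the internal edges of $\GK{G}$ lie among the edges of $\GK{\An{13}}$. In that graph every triangle must contain the edge $11-13$, since $\{2,3,5,7\}$ is independent and each of these primes is adjacent only to $11$ and $13$. So a triangle in $\GK{G}[\pi(\An{13})]$ forces $11-13\in\GK{G}$.

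Next I reduce to a section of $G$ of the form $B\rtimes H$, where $H\in\{\An{13},2.\An{13},\Sn{13}\}$ and $B$ is a direct product of elementary abelian groups. I would use the restriction from \cite[Theorem A]{alland2025realizinggruenbergkegelgraphstsolvable} on which extensions are relevant for internal edges, together with \cref{lma: K-does-not-affect-H-internal-edges} to discard direct factors of coprime order. The point is that any missing internal edge of $\GK{G}$ relative to $\GK{\An{13}}$ must be witnessed either inside such an $H$ or by an $H$-module in some characteristic $r\in\pi(\An{13})$.

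Now suppose, for contradiction, that $p-11\in\GK{G}$ but $p-13\notin\GK{G}$. The edge $p-13$ is present in $\GK{H}$: since $13\nmid|Z(2.\An{13})|$ and $13\nmid|\Out|$, an element of order $13p$ would already have to exist in $\An{13}$, and it does not. So the edge is lost through the action of $H$ on a module in some characteristic $r$. By \cite[Corollary 2.0.2]{alland2025realizinggruenbergkegelgraphstsolvable} and \cref{A13 backwards direction}, this happens in one of two ways.
\begin{enumerate}
\item $r,s$ are distinct primes in $R$ with $\{r,s\}=\{p,13\}$. Then $13\in R$, and by \cref{lma: Brauer internal edges A_13}(1) every $13$-Brauer character has a fixed point under an element of order $11$. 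So $11-13$ is removed, contradicting the first step.
\item $p-13\in A_\chi$ for some chosen Brauer character $\chi$. Then \cref{lma: Brauer internal edges A_13}(2) gives $p-11\in A_\chi$ or $11-13\in A_\chi$. Either edge is then absent from $\GK{G}$, contradicting the hypothesis $p-11\in\GK{G}$ or the first step.
\end{enumerate}
Hence $p-13\in\GK{G}$.

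The main obstacle is the reduction step: justifying that every internal edge deletion in an arbitrary $\An{13}$-solvable group is explained by Brauer-character fixed points on elementary abelian modules of a single relevant extension $H$. The extension restriction covers covers and outer automorphisms. I would handle the remaining issue, non-split or non-elementary-abelian normal sections, by passing to chief factors $V$ of $G$ below a section isomorphic to $H$ and observing that an element of order $pq$ arises already from a fixed point of a $q$-element on a $p$-chief factor.
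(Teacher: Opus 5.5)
Your Brauer-character endgame is the paper's. The triangle keeps $11-13$, and after passing to $B\rtimes(H\times K)$ with $H\in\{\An{13},2.\An{13},\Sn{13}\}$, parts (1) and (2) of \cref{lma: Brauer internal edges A_13} show that removing $p-13$ also removes $p-11$ or $11-13$. Your separate case $\{p,13\}\subseteq R$ is equivalent to the paper's remark that all cross-characteristic removals $r-s$ already lie in $\bigcup_{r\in R}A_r$.

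\textbf{The gap is the reduction step, which you flag but do not close.} \cite[Theorem A]{alland2025realizinggruenbergkegelgraphstsolvable} is conditional. It needs either $2$ to have a neighbor outside $\pi(T)$, or the Fermat-prime condition recalled in the paper's last section. The paper secures this via \cite[Theorem 2.1.1]{alland2025realizinggruenbergkegelgraphstsolvable}: the triangle is what puts $G$ in the scope of Theorem A. Concretely, by \cref{bridging edges char} and the shape of $\GK{\An{13}}$, the only possible odd neighbors of the Fermat primes $3,5$ are $11$ and $13$. The edge $11-13\in\GK{G}$ then gives each of them a non-Fermat neighbor. So the paper uses the triangle hypothesis twice, and your sketch uses it only once.

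Your chief-factor patch does not replace this, for two reasons.
\begin{itemize}
\item The argument needs a group with the \emph{same} prime graph complement as $G$, which is what Theorem A provides, not a section of $G$. For a section $S$ you only get $\GK{G}[\pi(S)]\subseteq\GK{S}$. That cannot tell you where an element of order $13p$ in $G$ comes from, yet your case analysis tacitly uses equality.
\item A fixed point of a $q$-element on a $p$-chief factor produces an element of order $pq$, so it explains why edges disappear. You need the converse. Starting from an arbitrary element of order $13p$ in an arbitrary $\An{13}$-solvable group, you must produce an element of order $11p$ or $143$. Such a group may have several $\An{13}$ chief factors or non-split extensions, and the $13$-part and $p$-part may live in different sections.
\end{itemize}
Handling that converse is exactly the content of Theorem A. Once it applies, non-split and non-elementary-abelian sections are already absorbed, and no further patch is needed.

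A minor slip: the claim $p-13\in\GK{H}$ fails for $p=2$ and $H=2.\An{13}$. If $g$ has order $13$ and $z$ is the central involution, then $zg$ has order $26$. The relevant condition is $\langle x\rangle\cap Z(H)=1$, not $13\nmid|Z(H)|$. This is harmless: $2$ is then isolated in $\GK{H}$, so $2-11\notin\GK{G}$ and the statement is vacuous. You should still split this case off explicitly.
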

\begin{proof}
    By \cite[Theorem 2.1.1]{alland2025realizinggruenbergkegelgraphstsolvable}, since $\GK{G}$ contains a triangle, $G$ satisfies the hypotheses of \cite[Theorem A]{alland2025realizinggruenbergkegelgraphstsolvable}. Hence $\GK{G}$ is realized by a group of the form $B\rtimes(H\times K)$, where $B$ is a direct product of elementary abelian groups, $K$ is solvable such that $\pi(K)\cap\pi(H)=\emptyset$, and either $H=2.\An{13}$ or $H=\An{13}.O$ for , possibly trivial, $O\leq\operatorname{Out}(\An{13})$. Since $\operatorname{Out}(\An{13})\cong C_2$, we have $H\in\{2.\An{13},\An{13},\Sn{13}\}$. In all cases, $\pi(H)=\pi(\An{13})=\{2,3,5,7,11,13\}$.
    \medskip
    
    By \cref{lma: K-does-not-affect-H-internal-edges}, the graph induced on $\pi(\An{13})$ is determined only by $B_{\pi(\An{13})}\rtimes H$. Therefore
    \begin{equation*}
        \GK{G}[\pi(\An{13})]\cong \GK{B_{\pi(\An{13})}\rtimes H}.
    \end{equation*}
    Combining this with \cref{A13 backwards direction}, we get
    \begin{equation*}
        \GK{G}[\pi(\An{13})]\cong
        \GK{H}\setminus\left(\bigcup_{r\in R}A_r\cup\{s-t\mid s,t\in R,\ s\neq t\}\right)
    \end{equation*}
    for some $R\subseteq\pi(H)$, where the sets $A_r$ are defined as in \cref{A13 backwards direction}. Also, for any two distinct $s,t \in R$, without loss of generality, we can assume that $s\ne 13$ and $s$ acts with fixed points in every characteristic $t$ irreducible Brauer representation, by \cref{lma: Brauer internal edges A_13} (1). Therefore, $s-t \in A_t \subseteq \bigcup_{r\in R}A_r$, and consequently, $\{s-t\mid s,t\in R,\ s\neq t\}\subseteq\bigcup_{r\in R}A_r$. This implies that 
    $$
    \GK{G}[\pi(\An{13})]\cong
        \GK{H}\setminus\left(\bigcup_{r\in R}A_r\right)
    $$
    
    Hence, every possible internal graph $\GK{G}[\pi(\An{13})]$ is obtained from the Brauer character data of one of the groups $2.\An{13}$, $\An{13}$, or $\Sn{13}$. In particular, if $p\in\{2,3,5,7\}$ then $p-13\notin\GK{G}$ if and only if there exists $\chi \in \IBr_p(H)\cup \IBr_{13}(H)$ such that $p-13\in A_{\chi} \subseteq \bigcup_{r\in R}A_r$, where $A_{\chi}$ is defined as in \cref{lma: Brauer internal edges A_13} (2). By the same result, if $p-13\in A_{\chi}$, then $p-11\in A_{\chi}$ or $11-13\in A_{\chi}$. However, the fact that $\GK{G}[\pi(\An{13})]$ contains a triangle guarantees that $11-13 \in \GK{G}[\pi(\An{13})]$, since this edge is contained in every triangle of $\GK{\An{13}}$. Thus, if $p-13\in A_{\chi}$, then $p-11\in A_{\chi}$, or equivalently, $p-11\in\GK{G}$ implies $p-13\in\GK{G}$.   
    
\end{proof}

\begin{theorem}
\label{A_13 Characterization}
    A graph $\Xi$ is the complement of the prime graph of an $\An{13}$-solvable group if and only if $\Xi$ is $3$-colorable and satisfies one of the following conditions:
    \begin{enumerate}
        \item $\Xi$ is triangle-free.
    
        \item $\Xi$ contains an induced subgraph $S$ on vertices
        $\{a,b,c_1,c_2,c_3,c_4\}$ satisfying the following properties:
        \begin{description}
            \item[(a)] If $v\in \Xi\setminus S$, $w\in S$, and $v-w\in \Xi$, then $w=a$.
    
            \item[(b)] The graph $\Xi\setminus S$ is triangle-free and has a $3$-coloring such that $N_\Xi(S)\setminus S$ is monochromatic.
    
            \item[(c)] We have $a-b\in S$, and $c_i-c_j\notin S$ for all distinct $c_i,c_j\in\mathcal{C}:=\{c_1,c_2,c_3,c_4\}$.
    
            \item[(d)] For every $c\in\mathcal{C}$, $c-b\in S$ implies that $c-a\in S$.
        \end{description}
    \end{enumerate}
\end{theorem}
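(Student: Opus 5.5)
Both directions follow the template of the proof of \cref{A_11 Characterization}; the dictionary is $a=13$, $b=11$, and $\mathcal C=\{2,3,5,7\}$.

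\textbf{Forward direction.} Let $G$ be $\An{13}$-solvable. If $G$ is solvable, then $\GK{G}$ is $3$-colorable and triangle-free by \cref{thm: solvable case}, so (1) holds. So assume $G$ is strictly $\An{13}$-solvable.

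By \cite[Lemma 2.3.5]{keller2024classifyingprimegraphsfinite}:
\begin{itemize}
\item $\GK{G}$ is $3$-colorable with $N(\pi(\An{13}))\setminus\pi(\An{13})$ monochromatic;
\item every triangle of $\GK{G}$ lies inside $\GK{G}[\pi(\An{13})]$, so $\GK{G}\setminus\pi(\An{13})$ is triangle-free.
\end{itemize}
If $\GK{G}$ is triangle-free we are in case (1). Otherwise take $S=\GK{G}[\pi(\An{13})]$, with $a=13$, $b=11$, $\{c_1,\dots,c_4\}=\{2,3,5,7\}$, and check the conditions:
\begin{itemize}
\item \textbf{(a)} follows from \cref{bridging edges char}, since $n=13$ is prime.
\item \textbf{(b)} is the colorability statement above.
\item \textbf{(c)}: $11-13\in S$ because it lies in every triangle of $\GK{\An{13}}$. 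Also $\{2,3,5,7\}$ is independent in $\GK{\An{13}}$ by \cref{lma: A_n graph characterization}, since $p+q\le 12<13$ for distinct odd $p,q\le 7$ and $7+4=11<13$. Hence it is independent in $S\subseteq\GK{\An{13}}$.
\item \textbf{(d)} is exactly \cref{A13 edge implication}.
\end{itemize}

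\textbf{Backward direction.} If $\Xi$ is triangle-free, use \cref{thm: solvable case}. Otherwise fix $S$ as in (2) and label $a=13$, $b=11$, $\mathcal C=\{2,3,5,7\}$. Condition (c) forces $S$ to be a subgraph of $\GK{\An{13}}$ containing $11-13$. The first step is to realize $S$ as $\GK{F}$ for a group $F$ with quotient $\An{13}$ whose kernel is coprime to $13$.

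The edges to remove from $\GK{\An{13}}$ are the edges $c-11$ and $c-13$ for $c\in\mathcal C$. By (d), the allowed neighborhood patterns of each $c$ in $S$ are:
\begin{itemize}
\item $\{11,13\}$: remove nothing.
\item $\{13\}$ only: let $R$ contain $c$ and choose $Y_c=\{\chi_c\}$ from \cref{lma: Brauer internal edges A_13}(3). This removes exactly $c-11$; by \cref{cor:Ap} the elements of order $13$ still act fixed-point-freely.
\item $\emptyset$: remove both edges. Take $Y_c$ to be the trivial Brauer character in characteristic $c$, which has fixed points under every element, so $A_c=\{c-11,c-13\}$.
\end{itemize}
Let $R$ be the set of $c$ needing some deletion. \cref{lma: Brauer realization theorem} then produces $F=V\rtimes\An{13}$ with
\[
\GK{F}=\GK{\An{13}}\setminus\Bigl(\bigcup_{r\in R}A_r\cup\{r-s\mid r,s\in R,\ r\ne s\}\Bigr).
\]
The extra deleted edges $r-s$ join primes in $\mathcal C$, and $\mathcal C$ is already independent in $\GK{\An{13}}$, so they change nothing. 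Hence $\GK{F}\cong S$.

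\textbf{Extending to all of $\Xi$.} Apply \cref{lma: psl2q realization} with $E=F$ and $X=S$. The coloring hypothesis is (b). For each outside vertex $v$ we need a complex irreducible representation of $F$ matching $v$'s neighbors in $X$. By (a), $v$ can only be adjacent to $a=13$:
\begin{itemize}
\item If $v$ has no neighbor in $X$, use the trivial representation.
\item If $v-13\in\Xi$, use $\tilde\rho\circ\pi$, where $\tilde\rho$ comes from \cref{thm: A_n complex rep} and $\pi\colon F\to\An{13}$ is the projection. The kernel $V$ is a $\{2,3,5,7\}$-group, so elements of order $13$ in $F$ map to elements of order $13$ and act fixed-point-freely. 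Elements of other prime orders have fixed points, by \cref{cor:Cr-leq-n/2-obstruction} for $q\le 5$ and by \cref{thm: A_n complex rep} for $q\in\{7,11\}$; elements of order $q\in\{2,3,5,7\}$ lying in $V$ act trivially.
\end{itemize}

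\textbf{Main obstacle.} The delicate step is the case analysis for the $c$-neighborhoods, specifically that the pattern ``$c-11$ present, $c-13$ absent'' is exactly what (d) forbids and never needs to be realized. This depends on \cref{A13 edge implication} in the forward direction and on \cref{lma: Brauer internal edges A_13}(3) in the backward one. One further point to check is that the trivial-character choice really deletes both edges: its fixed-point average is $1>0$ for every element, so it does.
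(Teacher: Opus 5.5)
Your proposal is correct and follows the paper's proof in both directions. The forward direction uses the same ingredients: \cite[Lemma 2.3.5]{keller2024classifyingprimegraphsfinite}, \cref{bridging edges char}, the structure of $\GK{\An{13}}$, and \cref{A13 edge implication}.

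The backward direction builds essentially the same group. The one difference is how you isolate each $c$ with $c-13\notin S$: you feed the trivial Brauer character into \cref{lma: Brauer realization theorem}, whereas the paper adjoins a central factor $C_{f(c)}$. Both give a central elementary abelian $f(c)$-factor, so the groups agree. Your explicit check that elements of prime order $q\neq 13$ have fixed points under $\tilde\rho\circ\pi$ is a detail the paper leaves implicit.
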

\begin{proof}
    We first prove the forward implication. Suppose that $\Xi=\GK{G}$ for some $\An{13}$-solvable group $G$. If $G$ has no composition factor isomorphic to $\An{13}$, then $G$ is solvable, and hence $\Xi$ is triangle-free and $3$-colorable by \cref{thm: solvable case}. Thus, Condition~\textup{1} holds.
    
    We may therefore assume that $G$ is strictly $\An{13}$-solvable. Let $S=\Xi[\pi(\An{13})]$, and label its vertices so that $a=13$, $b=11$, and $\{c_1,c_2,c_3,c_4\}=\{2,3,5,7\}$. By \cref{thm: A_n complex rep,thm: 2.A_n complex rep}, if $p\in\pi(G)\setminus\pi(\An{13})$, $q\in\pi(\An{13})$, and $p-q\in \Xi$, then $q=13$. Therefore, $a$ is the only vertex of $S$ that may be adjacent to a vertex outside $S$. This proves Condition~\textup{2(a)}. Consequently, the hypotheses of \cite[Lemma 2.3.5] {keller2024classifyingprimegraphsfinite} are satisfied. It follows that $\Xi$ is $3$-colorable and admits a $3$-coloring in which $N_\Xi(S)\setminus S$ is monochromatic. The same result implies that every triangle of $\Xi$ is contained entirely in $S$ and $\Xi\setminus S$ is triangle-free. Hence Condition~\textup{2(b)} holds. Moreover, we may assume that $\Xi$ contains a triangle, since otherwise Condition~\textup{1} again holds. Since $\An{13}$ is a section of $G$, we have $S\subseteq\GK{\An{13}}$. In particular, no two vertices corresponding to primes in $\{2,3,5,7\}$ are adjacent in $S$. Thus, $c_i-c_j\notin S$ for all distinct $c_i,c_j\in\mathcal{C}$. Furthermore, every triangle in $\GK{\An{13}}$ contains the edge $11-13$. Since $\Xi$ contains at least one triangle and every triangle of $\Xi$ is contained in $S$, it follows that $a-b\in S$. This proves Condition~\textup{2(c)}. Finally, Condition~\textup{2(d)} follows from \cref{A13 edge implication}. This completes the proof of the forward implication.
    
    We now prove the reverse implication. Suppose first that $\Xi(X,E)$ is triangle-free. Since $\Xi$ is also $3$-colorable, \cref{thm: solvable case} guarantees the existence of a solvable group whose prime graph complement is isomorphic to $\Xi$. Since every solvable group is $\An{13}$-solvable, the result follows in this case.
    
    We may therefore assume that $\Xi$ contains an induced subgraph $S$ satisfying conditions~\textup{2(a)}--\textup{2(d)}. We begin by constructing an $\An{13}$-solvable group $G_2$ such that $\GK{G_2}\cong S$.
    
    Fix a bijection $f:S\to\pi(\An{13})$ such that $f(a)=13$, $f(b)=11$. Now define $\mathcal{C}_1:=\{c\in\mathcal{C}:c-b\notin S \text{ and }c-a\in S\}$. We show in \cref{lma: Brauer internal edges A_13} (3) that, for every $c\in\mathcal{C}_1$, there is a Brauer character of $\An{13}$ in characteristic $f(c)$, denoted by $\chi_c\in\IBr_{f(c)}(\An{13})$, satisfying
    $A_{\chi_c}\cap \GK{\An{13}} = \{f(c)-11\},$
    where
    \begin{equation*}
        A_{\chi_c}=\left\{f(c)-q \,\middle|\,
        \begin{array}{c}
            \text{there exists }g\in \An{13}\text{ such that }|g|=q
            \\
            \text{and }\displaystyle\frac{1}{|g|}\sum_{x\in\langle g\rangle}\chi_c(x)>0
        \end{array}
        \right\}.
    \end{equation*}
    
    We now apply \cref{lma: Brauer realization theorem}, set $R:=f(\mathcal{C}_1)$ and, for each $c\in\mathcal{C}_1$, let $Y_{f(c)}:=\{\chi_c\}$. For these choices, $A_{f(c)}=A_{\chi_c}$, and the additional edges between distinct primes in $R$ removed by the lemma are already absent from $\GK{\An{13}}$, since $R\subseteq\{2,3,5,7\}$. Therefore, \cref{lma: Brauer realization theorem} yields an $\An{13}$-solvable group $G_1$ such that $\GK{G_1}\cong\GK{\An{13}}\setminus\{f(c)-11:c\in\mathcal{C}_1\}$. 
    
    Next, define $\mathcal{C}_2:=\{c\in\mathcal{C}:c-a\notin S\}$, and let $G_2:=\left(\prod_{c\in\mathcal{C}_2}C_{f(c)}\right)\times G_1$. By Condition~\textup{(d)}, if $c\in\mathcal{C}_2$, then $c-b\notin S$. By Condition~\textup{(c)}, such a vertex $c$ is therefore isolated in $S$. On the other hand, the factor $C_{f(c)}$ is central in $G_2$, so $f(c)$ is isolated in $\GK{G_2}$.
    
    We claim that $\GK{G_2}\cong S$. Indeed, if $c\in\mathcal{C}_2$, then $f(c)$ is isolated in $\GK{G_2}$, and hence its neighborhood agrees with that of $c$ in $S$. If $c\in\mathcal{C}_1$, then the edge $f(c)-13$ remains in $\GK{G_2}$, whereas the construction removes the edge $f(c)-11$. Finally, if $c\notin\mathcal{C}_1\cup\mathcal{C}_2$, then both edges $f(c)-13$ and $f(c)-11$ remain in $\GK{G_2}$.
    
    Moreover, no edge occurs between $f(c_i)$ and $f(c_j)$ for distinct $i,j$, while the edge $11-13$ remains in $\GK{G_2}$. Therefore, under the labeling $f$, the graphs $\GK{G_2}$ and $S$ have the same vertex set and the same edge set. Hence, $\GK{G_2}\cong S$.
    
    We now show that for each vertex $v \in X \setminus S$, there exists a complex representation $\rho$ of $G_2$ such that for all $x \in S$, $x-v \in \Xi$ if and only if all elements of order $f(x)$ act without fixed points in $\rho$. In the case where $N(v) \cap S = \emptyset$, the trivial representation suffices. Consider then, the case where $v$ forms a bridging edge with some $x$. We know from 2(b) that if $x-v \in \Xi$, then $x=a$, which we associated to $13$ through $f$. By \cref{thm: A_n complex rep}, there exists a representation $\tilde{\rho}$ of $\An{13}$ in which only elements of order $13$ act without fixed points. Notice that $G_2 = N \rtimes \An{13}$ for $N=\left( \prod_{c\in\mathcal{C}_2} C_{f(c)} \right) \times V$ and $V$ from \cref{lma: Brauer realization theorem} such that $V \rtimes \An{13} = G_1$. Let $\pi: G_2 \to \An{13}$ be the natural projection map. We can then define a representation of $G_2$, $\rho:=\tilde{\rho} \circ \pi$. Since, by construction, $ \ker\rho = N$ and $13 \nmid |N|$, one can see that $|g|=13$ implies $|\rho(g)|=13$ for all $g \in G_2$. Consequently, $\rho$ is the desired representation of $G_2$. 
    
    Finally, we apply \cref{lma: psl2q realization} to extend the realization of $S$ to the entire graph $\Xi$ and conclude there exists an $\An{13}$-solvable group $G$ such that $\GK{G} \cong \Xi$.
\end{proof}

\section{Future Directions} 

As shown in \cref{Sec: Prime Graph Complement,Sec: Representations of An,Sec: Edges}, the tools developed in this paper may be useful in a future characterization of all graphs realizable as prime graph complements of $\mathcal A_n$-solvable groups. In \cref{Sec: A11 and A13}, we applied these tools to solve the cases $n=11$ and $n=13$. The approach used there could potentially be generalized to arbitrary $n$, provided that several remaining obstacles are addressed.

We first discuss the limitations of \cite[Theorem A]{alland2025realizinggruenbergkegelgraphstsolvable}. This result is used in \cref{A13 edge implication} to reduce the classification problem to the study of prime graph complements of groups isomorphic to $B\rtimes(H\times K)$, where $B$ is a direct product of elementary abelian groups, $K$ is solvable with $\pi(K)\cap\pi(H)=\emptyset$, and either $H=2.\An{13}$ or $H=\An{13}.O$ for some possible trivial $O\leq\operatorname{Out}(\An{13})$. However, the theorem applies only to groups satisfying one of the following conditions:
\begin{enumerate}
    \item The prime $2$ is adjacent to some $p\in \pi(G)\setminus \pi(T)$ in $\overline{\Gamma}(G)$.

    \item If $t\in \pi(T)$ is Fermat, then, for every odd $q\in \pi(G)$ such that $t$ is adjacent to $q$ in $\overline{\Gamma}(G)$, there exists a non-Fermat $r\in \pi(T)$ such that $r$ is adjacent to $q$ in $\overline{\Gamma}(G)$.
\end{enumerate}

In general, $\mathcal A_n$-solvable groups need not satisfy these conditions. For example, if $G\cong C_{11}\times C_7\times \mathcal A_{17}$, then $13$ is adjacent only to Fermat primes in $\GK{G}$. One possible way to remove this obstruction is suggested by the conjecture in \cite{alland2025realizinggruenbergkegelgraphstsolvable}, which predicts that the conclusion of Theorem A remains valid even without the Fermat-prime hypothesis.

A second limitation is that, for $n\geq 17$, the prime graph complement of $\mathcal A_n$ is not $3$-colorable. Indeed, \cref{chromatic number} shows that the chromatic number of $\GK{\mathcal A_n}$ grows with $n$. In the cases $n=11$ and $n=13$, we use the $3$-colorability of $\GK{\mathcal A_{11}}$ and $\GK{\mathcal A_{13}}$ to apply \cite[Lemma 2.3.5]{keller2024classifyingprimegraphsfinite}. A natural starting point for overcoming this issue is \cite[Lemma 1.2.1]{alland2025realizinggruenbergkegelgraphstsolvable}, which provides a related tool that does not rely on $3$-colorability. 

The final obstacle to extending our results to arbitrary $n$ is the lack of Brauer character tables for $\mathcal A_n$, $2.\mathcal A_n$, and $\mathcal S_n$ when $n\geq \text{16}$ \cite{GAP,ATLAS}. Such data is needed to determine which internal edges can be removed; for instance, we use this information in \cref{lma: Brauer internal edges A_13} for $\mathcal A_{13}$.

A first step toward addressing this limitation appears already in \cref{lma: Brauer internal edges A_13} itself, where we use the tools developed in \cref{Sec: Representations of An} to obtain fixed-point-free information without relying on a full Brauer character table. Generalizing the techniques introduced in \cref{cor:Cr-leq-n/2-obstruction} and \cref{cor:Ap} seems to be the most plausible path forward, since computing Brauer character tables for large $n$ is currently not feasible.

Motivated by these observations, we formulate a series of questions concerning elements that do not act fixed-point-freely on irreducible Brauer modules for $\mathcal{A}_n$, $2.\mathcal{A}_n$, and $\mathcal{S}_n$. Positive answers to these questions would further clarify the relationship between element orders, conjugacy classes, and the existence of fixed points in modular representations.

\begin{question}
    Let $n \geq 10$, and let $p$ and $q$ be distinct primes such that $p,q \leq n$ and $n/2<p<n-1$. For every irreducible $\F_q[\A_n]$-module, does there exist an element $g\in \A_n$ of order $p$ such that $g$ has a nonzero fixed point on the module?
\end{question}

\noindent
This is precisely the range in which all elements of order $p$ form a single conjugacy class in $\A_n$, which also implies that the Brauer character values of these elements are integers and follow directly from $\mathcal{S}_n$. Consequently, a positive answer would imply that every element of order $p$ has a nonzero fixed point on every irreducible $\F_q[\A_n]$-module.

\begin{question}
\label{Q:does_not_divide}
    Let $p$ and $q$ be distinct primes with $p,q\leq n$, and suppose that $p=n-1$ and $q\nmid n$. For every irreducible $\F_q[\A_n]$-module, does there exist an element $g\in \A_n$ of order $p$ such that $g$ has a nonzero fixed point on the module?
\end{question}

\noindent In \cref{Sec: Representations of An} we answer the complement of \cref{Q:does_not_divide} for when $q \mid n$. \cref{cor:Ap+1_q|p} serves in constructing an irreducible module in which elements of order $p=n-1$ act without fixed points. The conclusion of the above Question would suffice in understanding whether, outside of the exceptional case of \cref{cor:Ap+1_q|p}, Brauer fixed-point freeness follows directly from the complex case. 

\begin{question}
    Let $p$ and $q$ be distinct primes with $p,q\leq n$. Does there exist an element $g\in 2.\A_n$ of order $p$ such that $g$ has a nonzero fixed point on every irreducible spin $\F_q[2.\A_n]$-module?
\end{question}

We believe that progress in each of the directions discussed above---extending the structural results used in our classification, replacing the reliance on 3-colorability with tools applicable to larger alternating groups, and obtaining the necessary fixed-point information without requiring complete Brauer character tables---would allow the framework developed in this paper to be extended to general $n$. In particular, answers to Questions 5.1--5.3 would address much of the remaining representation-theoretic difficulty and, together with suitable advances in the other two directions, could ultimately lead to a characterization of the prime graph complements of $\A_n$-solvable groups for arbitrary $n$.

\section*{}
\begin{center}
    \textbf{Acknowledgements}
\end{center}
 This research is the result of an REU project at Texas State University conducted under the mentorship of T. M. Keller. A. Beun, E. Cabrera, D. N. Goto, and T. M. Keller were supported by NSF-REU grant DMS-2447229 as well as Jane Street. L. Abounader, A. Bell, and T. M. Keller were supported by a Texas State REP grant. The authors gratefully acknowledge the financial support of NSF and Jane Street, and thank Texas State University for providing a great working environment and support. T. M. Keller was also partially supported by a grant from the Simons Foundation (SFI-MPS-TSM-00014153 to T. M. K.).




\begin{thebibliography}{99}

\bibitem{ATLAS}
R. Abbott, J. Bray, S. Linton, S. Nickerson, S. Norton, R. Parker, I. Suleiman, J. Tripp, P. Walsh, and R. Wilson. \emph{Atlas of finite group representations --- version 3}. \url{https://brauer.maths.qmul.ac.uk/Atlas/v3/}.

\bibitem{alland2025realizinggruenbergkegelgraphstsolvable}
L. Alland, A. Fridman, and T. M. Keller. ``Realizing Gruenberg--Kegel graphs of $T$-solvable groups with structurally simplified extensions of $T$''. arXiv preprint. 2025. arXiv: 2511.16403. URL: \url{https://arxiv.org/abs/2511.16403}.

\bibitem{amrutha2024cyclic}
P. Amrutha, A. Prasad, and Velmurugan S. ``Cyclic characters of alternating groups''. arXiv preprint. 2024. arXiv: 2403.05109. URL: \url{https://arxiv.org/abs/2403.05109}.

\bibitem{Cliff}
A. Clifford. ``Representations induced in an invariant subgroup''. In: \emph{Ann. of Math. (2)} 38.3 (1937), pp. 533--550. DOI: 10.2307/1968599.

\bibitem{edwards2022primegraphsgroupsarithmetically}
T. J. Edwards, T. M. Keller, R. M. Pesak, and K. Sellakumaran Latha. ``The prime graphs of groups with arithmetically small composition factors''. In: \emph{Ann. Mat. Pura Appl. (4)} 203.2 (2024), pp. 945--973.

\bibitem{florez2022primegraphs}
C. Flores, J. Higgins, K. Huang, T. M. Keller, D. Shen, and Y. Yang. ``The prime graphs of some classes of finite groups''. In: \emph{J. Pure Appl. Algebra} 226.7 (2022), Paper No. 106990, 19pp.

\bibitem{GRUBER2015397}
A. Gruber, T. M. Keller, M. Lewis, K. Naughton, and B. Strasser. ``A characterization of the prime graphs of solvable groups''. In: \emph{J. Algebra} 442 (2015), pp. 397--422.

\bibitem{HuangKellerKissingerPlotnickRomaYang+2024+89+117}
Z. Huang, T. M. Keller, S. Kissinger, W. Plotnick, M. Roma, and Y. Yang. ``A classification of the prime graphs of pseudo-solvable groups''. In: \emph{J. Group Theory} 27.1 (2024), pp. 89--117.

\bibitem{isaacs_finite_2008}
I. M. Isaacs. \emph{Finite Group Theory}. Graduate Studies in Mathematics. American Mathematical Society, 2008.

\bibitem{james1978}
G. D. James. \emph{The Representation Theory of the Symmetric Groups}. Vol. 682. Lecture Notes in Mathematics. Berlin: Springer-Verlag, 1978.

\bibitem{keller2024classificationprimegraphsoperatornamesz8}
T. M. Keller, Z. Martin, A. Renner, G. Roca, and E. Yu. ``Classification of the prime graphs of $\operatorname{Sz}(8)$-, $\operatorname{Sz}(32)$-, and $\operatorname{PSL}(2, 2^5)$-solvable groups''. submitted. 2024. arXiv: 2410.21063 [math.GR].

\bibitem{keller2025criteriaclassifyingprimegraphsPSL2q}
T. M. Keller, Z. Martin, A. Renner, G. Roca, and E. Yu. ``Criteria for classifying prime graphs of PSL(2, $q$)-solvable groups''. submitted. 2025. arXiv: 2510.21979 [math.GR].

\bibitem{keller2024classifyingprimegraphsfinite}
T. M. Keller, G. Pettigrew, S. Solotko, and L. Zheng. ``Classifying prime graphs of finite groups -- a methodical approach''. In: \emph{J. Pure Appl. Algebra} 229.11 (2025), 108089, 40pp.

\bibitem{MahadevPeled}
N. V. R. Mahadev and U. N. Peled. \emph{Threshold Graphs and Related Topics}. Vol. 56. Annals of Discrete Mathematics. North-Holland, 1995.

\bibitem{prasad2025minimal}
A. Prasad, A. Staroletov, and Velmurugan S. ``Minimal polynomials in spin representations of symmetric and alternating groups''. submitted. 2025. arXiv: 2512.24158 [math.GR].

\bibitem{GAP}
The GAP Group. \emph{GAP -- Groups, Algorithms, and Programming, Version 4.14.0}. 2024.

\bibitem{VasilievVdovin2005}
A. V. Vasil'ev and E. P. Vdovin. ``An Adjacency Criterion for the Prime Graph of a Finite Simple Group''. In: \emph{Algebra and Logic} 44.6 (2005), pp. 381--406. DOI: 10.1007/s10469-005-0037-5.

\bibitem{wolf2011spaces}
J. A. Wolf. \emph{Spaces of Constant Curvature}. 6th. Providence, RI: AMS Chelsea Publishing, 2011.

\end{thebibliography}
\end{document}